\providecommand{\U}[1]{\protect\rule{.1in}{.1in}}
\newtheorem{theorem}{Theorem}[section]
\newtheorem{definition}[theorem]{Definition}
\newtheorem{lemma}[theorem]{Lemma}
\newtheorem{proposition}[theorem]{Proposition}
\newtheorem{remark}[theorem]{Remark}
\newenvironment{proof}[1][Proof]{\noindent \textbf{#1.} }{\  \rule{0.5em}{0.5em}}
\numberwithin{equation}{section}
\begin{document}

\title{Local time and Tanaka formula of $G$-martingales }
\author{Guomin Liu \thanks{Zhongtai Securities Institute for Financial Studies,
Shandong University, Jinan 250100, China, gmliusdu@163.com.}}
\maketitle

\textbf{Abstract}. The objective of this paper is to study the local time and
Tanaka formula of symmetric $G$-martingales. We introduce the local time of $G$-martingales and show that they belong to $G$-expectation space $L_{G}^{2}(\Omega
_{T})$. The bicontinuous modification of local time is obtained. We finally give the Tanaka formula for convex functions of $G$-martingales.

{\textbf{Key words:} } $G$-martingale, Local time, Tanaka formula.

\textbf{AMS 2010 subject classifications:} 60H10, 60H30 \addcontentsline{toc}{section}{\hspace*{1.8em}Abstract}

\section{Introduction}
Motivated by probabilistic interpretations for fully nonlinear PDEs and
financial problems with model uncertainty, Peng \cite{peng2005,peng2008,Peng
1} systematically introduced the nonlinear $G$-expectation theory. Under the
$G$-expectation framework, Peng constructed $G$-Brownian motion, $G$-It\^{o}'s
stochastic calculus and $G$-stochastic differential equations ($G$-SDEs). The reader can refer to  \cite{G1,HJPS,Liny,Osuka} for further developments. 

One of the most important notion under $G$-framework is the $G$-martingles, which are defined as the processes satisfying martingale property through conditional $G$-expectation. The  representation theorem for $G$-martingales are obtained in \cite{STZ,Song,Song1}. The L\'{e}vy's characterization of $G$-martingales are investigated in \cite{XZ,XZ1}. The developments of $G$-martingales has a deep connection with the settlement of $G$-backward stochastic differential equations ($G$-BSDEs), see \cite{HJPS}.

This paper study the local time and Tanaka formula of symmetric $G$%
-martingales. It generalized the results in \cite{L-Q,H-J-L} where the
 $G$-Brownian motion case are considered. Compared with the classical case,  the integrand space for stochastic integral of  $G$-martingales is not big enough because of nonlinearity. So we first 
introduce a proper integrand space $\bar{M}_G^2(0,T)$ which is bigger than the previous $M_G^2(0,T)$ when the quadratic variation of $G$-martingales is
 degenerate.  Then, by proving some characterization results for $\bar{M}_G^2(0,T)$ and using the Krylov's estimate method as
in \cite{H-W-Z}, we construct the local time $L_{t}(a)$ for $G$-martingales and
show that $L_{t}(a)$ belong to the $G$-expectation space $L_{G}^{2}(\Omega
_{t})$.  Moreover, with the help of a localization argument, we prove that $L_{t}(a)$  has a modification which is continuous in $a$ and $t$. Finally,
we give the Tanaka formula for convex functions of $G$-martingales and state some basic properties of local time.

The paper is organized as follows. In Section 2, we recall some basic notions
and results of $G$-expectation and $G$-martingales. In Section 3, we state the
main results on local time and Tanaka formula of $G$-martingales.

\section{Preliminaries}

In this section, we review some basic notions and results of $G$-expectation
and $G$-martingales. More relevant details can be found in
\cite{peng2005,peng2008,Peng 1}.

\subsection{$G$-expectation space}

Let $\Omega$ be a given nonempty set and $\mathcal{H}$ be a linear space of
real-valued functions on $\Omega$ such that if $X_{1}$,$\dots$,$X_{d}%
\in\mathcal{H}$, then $\varphi(X_{1},X_{2},\dots,X_{d})\in\mathcal{H}$ for
each $\varphi\in C_{b.Lip}(\mathbb{R}^{d})$, where $C_{b.Lip}(\mathbb{R}^{d})$
is the space of bounded, Lipschitz functions on $\mathbb{R}^{d}$.
$\mathcal{H}$ is considered as the space of random variables.

\begin{definition}
A sublinear expectation $\hat{\mathbb{E}}$ on $\mathcal{H}$ is a functional
$\mathbb{\hat{E}}:\mathcal{H}\rightarrow\mathbb{R}$ satisfying the following
properties: for each $X,Y\in\mathcal{H}$,

\begin{description}
\item[\textrm{(i)}] {Monotonicity:}\quad$\mathbb{\hat{E}}[X]\geq
\mathbb{\hat{E}}[Y]\ \ \text{if}\ X\geq Y$;

\item[\textrm{(ii)}] {Constant preserving:}\quad$\mathbb{\hat{E}%
}[c]=c\ \ \ \text{for}\ c\in\mathbb{R}$;

\item[\textrm{(iii)}] {Sub-additivity:}\quad$\mathbb{\hat{E}}[X+Y]\leq
\mathbb{\hat{E}}[X]+\mathbb{\hat{E}}[Y]$;

\item[\textrm{(iv)}] {Positive homogeneity:}\quad$\mathbb{\hat{E}}[\lambda
X]=\lambda\mathbb{\hat{E}}[X]\ \ \ \text{for}\ \lambda\geq0$.
\end{description}

The triple $(\Omega,\mathcal{H},\mathbb{\hat{E}})$ is called a sublinear
expectation space.
\end{definition}

Set $\Omega_{T}:=C_{0}([0,T];\mathbb{R}^{d})$ the space of all $\mathbb{R}%
^{d}$-valued continuous paths $(\omega_{t})_{t\geq0}$ starting from origin,
equipped with the supremum norm. Denote by $\mathcal{B}(\Omega_{T})$ the Borel
$\sigma$-algebra of $\Omega_{T}$ and $B_{t}(\omega):=\omega_{t}$ the canonical
mapping. For each $t\in\lbrack0,T]$, we set $L_{ip}(\Omega_{t}):=\{\varphi
(B_{t_{1}},\ldots,B_{t_{k}}):k\in\mathbb{N},t_{1},\ldots,t_{k}\in
\lbrack0,t],\varphi\in C_{b.Lip}(\mathbb{R}^{k\times d})\}.$

Let $G:\mathbb{S}(d)\rightarrow\mathbb{R}$ be a given monotonic and sublinear
function. Peng constructed the sublinear $G$-expectation space  $(\Omega_T,L_{ip}(\Omega_{T}),\hat{\mathbb{E}%
})$, and under $\hat{\mathbb{E}}$, the
canonical process $B_{t}=(B_{t}^{1},\cdots,B_{t}^{d})$ is called a
$d$-dimensional $G$-Brownian motion.
The conditional $G$-expectation for $X=\varphi(B_{t_{1}}, B_{t_{2}}-B_{t_{1}%
},\cdots,B_{t_{n}}-B_{t_{n-1}})$ at $t=t_{j}$, $1\leq j\leq n$ is defined by
\[
\hat{\mathbb{E}}_{t_{j}}[X]:=\phi(B_{t_{1}}, B_{t_{2}}-B_{t_{1}}%
,\cdots,B_{t_{j}}-B_{t_{j-1}}),
\]
where $\phi(x_{1}, \cdots,x_{j})=\hat{\mathbb{E}}[\varphi(x_{1}, \cdots,x_{j},
B_{t_{j+1}}-B_{t_{j}},\cdots,B_{t_{n}}-B_{t_{n-1}})]$.

For each $p\geq1$, we denote by $L_{G}^{p}(\Omega_{t})$ the completion of
$L_{ip}(\Omega_{t})$ under the norm $||X||_{p}:=(\hat{\mathbb{E}}%
[|X|^{p}])^{1/p}$. The $G$-expectation $\hat{\mathbb{E}}[\cdot]$ and
conditional $G$-expectation $\hat{\mathbb{E}}_{t}[\cdot]$ can be extended
continuously to $L_{G}^{1}(\Omega_{T})$.

The following is the representation theorem for $G$-expectation.

\begin{theorem}
(\cite{D-H-P,H-P})\label{DHP representation} There exists a family
$\mathcal{P}$ of weakly compact probability measures on $(\Omega
_{T},\mathcal{B}(\Omega_{T}))$ such that
\[
\hat{\mathbb{E}}[X]=\sup_{P\in\mathcal{P}}E_{P}[X],\qquad\text{for each}\ X\in
L_{G}^{1}(\Omega_{T}).
\]
$\mathcal{P}$ is called a set that represents $\hat{\mathbb{E}}$.
\end{theorem}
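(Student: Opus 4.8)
The plan is to establish the representation in three stages: first obtain an abstract representation of $\hat{\mathbb{E}}$ as an upper envelope of linear functionals by a Hahn--Banach argument, then upgrade these functionals to genuine (countably additive) probability measures using a regularity property of $\hat{\mathbb{E}}$, and finally extract weak compactness of the representing family via tightness and Prokhorov's theorem.

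For the first stage, I would regard $\hat{\mathbb{E}}$ as a sublinear functional on $L_{ip}(\Omega_T)$, or on its completion $L_G^1(\Omega_T)$. Sub-additivity and positive homogeneity (properties (iii) and (iv) in the definition of a sublinear expectation) say exactly that $\hat{\mathbb{E}}$ is sublinear, so for each fixed $X$ the Hahn--Banach theorem produces a linear functional $\ell_X \le \hat{\mathbb{E}}$ with $\ell_X(X) = \hat{\mathbb{E}}[X]$. Monotonicity (i) and constant preservation (ii) force each such $\ell$ to be positive and to satisfy $\ell(1) = 1$. Collecting these functionals into a family $\Theta$ gives $\hat{\mathbb{E}}[X] = \sup_{\ell \in \Theta} \ell(X)$, where at this level each $\ell$ is at worst a finitely additive normalized expectation.

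The decisive stage is promoting each $\ell$ to a countably additive probability measure $P$ on $(\Omega_T,\mathcal{B}(\Omega_T))$ with $\ell(X) = E_P[X]$. The standard route is to verify that $\hat{\mathbb{E}}$ is regular on $C_b(\Omega_T)$, i.e. $\hat{\mathbb{E}}[X_n] \downarrow 0$ whenever $X_n \in C_b(\Omega_T)$ and $X_n \downarrow 0$ pointwise; once this holds, the Daniell--Stone (equivalently Riesz) representation theorem yields a Radon probability measure for each positive normalized linear functional, and the representation extends from $C_b(\Omega_T)$ to $L_G^1(\Omega_T)$ by density of $L_{ip}(\Omega_T)$ and continuity of both sides in the $L_G^1$-norm. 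I expect this regularity to be the main obstacle: for a general sublinear expectation it can fail, and here it must be extracted from concrete information about the law of $G$-Brownian motion. Concretely, I would either invoke Dini's theorem together with tightness of an explicitly constructed family $\{P_\theta\}$ --- for instance the laws of the It\^{o} integrals $\int_0^\cdot \theta_s\, dW_s$ under a reference Wiener measure, with $\theta$ ranging over processes valued in the set determined by $G$ --- or establish the matching inequalities $\hat{\mathbb{E}}[X] \ge \sup_\theta E_{P_\theta}[X]$ (via It\^{o}'s formula and the PDE defining $G$) and $\hat{\mathbb{E}}[X] \le \sup_\theta E_{P_\theta}[X]$ (via a stochastic control representation of the Hamilton--Jacobi--Bellman equation solved by $u(t,x) = \hat{\mathbb{E}}[\varphi(x + B_T - B_t)]$).

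Finally, for weak compactness I would prove that $\mathcal{P}$ is tight, using uniform moment estimates of the form $\sup_{P \in \mathcal{P}} E_P[|B_t - B_s|^{2m}] \le C_m |t-s|^m$ together with a Kolmogorov-type criterion, so that Prokhorov's theorem gives relative weak compactness; replacing $\mathcal{P}$ by its weak closure preserves the identity $\hat{\mathbb{E}}[X] = \sup_{P} E_P[X]$ because $P \mapsto E_P[X]$ is weakly continuous for $X \in C_b(\Omega_T)$ and the general case follows by approximation in $L_G^1(\Omega_T)$. This produces a weakly compact representing family and completes the proof.
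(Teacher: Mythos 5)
First, a point of reference: the paper does not prove this theorem at all --- it is quoted with attribution to \cite{D-H-P,H-P}, so your attempt can only be measured against the proofs in those references. Your plan does assemble the main ingredients of that proof: the concrete family $\{P_\theta\}$ of laws of It\^{o} integrals $\int_0^\cdot \theta_s\,dW_s$ with $\theta$ valued in the set determined by $G$, the identification of $\hat{\mathbb{E}}$ with the value function of the associated stochastic control problem through the $G$-heat/HJB equation, and tightness plus Prokhorov for weak compactness. However, as written the proposal has genuine gaps, both structural and substantive.

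The structural problem is a circularity that makes your first two stages dead weight. The regularity ``$X_n\downarrow 0$ pointwise $\Rightarrow \hat{\mathbb{E}}[X_n]\downarrow 0$'' needed for Daniell--Stone cannot be extracted from the axioms of a sublinear expectation; you propose to obtain it from Dini's theorem plus tightness of $\{P_\theta\}$, but that argument presupposes the domination $\hat{\mathbb{E}}\le \sup_\theta E_{P_\theta}$ --- and once you have that inequality together with its (easier) reverse, the family $\{P_\theta\}$ itself already represents $\hat{\mathbb{E}}$ on $L_{ip}(\Omega_T)$, so the Hahn--Banach functionals are never needed. The proof must simply be collapsed to the concrete route. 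The substantive problems are then twofold. First, the actual core --- $\hat{\mathbb{E}}[X]=\sup_\theta E_{P_\theta}[X]$ for cylinder variables --- is only gestured at, and only in the single-time case $X=\varphi(B_T)$; for $X=\varphi(B_{t_1},\ldots,B_{t_n})$ one needs a backward induction consistent with the definition of conditional $G$-expectation, and the comparison of $u(t,x)=\hat{\mathbb{E}}[\varphi(x+B_T-B_t)]$ with the control value function requires either enough regularity of $u$ for a verification argument or a viscosity-solution comparison principle; this is where the real work in \cite{H-P} lies. Second, the final extension from $L_{ip}(\Omega_T)$ (or $C_b(\Omega_T)$) to $L_G^1(\Omega_T)$ ``by continuity of both sides in the $L_G^1$-norm'' is itself circular: continuity of $X\mapsto \sup_P E_P[X]$ with respect to $\Vert\cdot\Vert_{L_G^1}$ requires precisely the domination $E_P[\cdot]\le\hat{\mathbb{E}}[\cdot]$ on all of $L_G^1(\Omega_T)$, which is part of the conclusion; closing this gap is exactly what the capacity and quasi-continuity theory of \cite{D-H-P} is for (norm convergence implies convergence in capacity, elements of the abstract completion are realized as quasi-continuous Borel functions, and only then does each $E_P$ extend compatibly).
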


Given $\mathcal{P}$ that represents $\hat{\mathbb{E}}$, we define the
capacity
\[
c(A):=\sup_{P\in\mathcal{P}}P(A),\ \ \ \ \text{for each}\ A\in\mathcal{B}%
(\Omega_{T}).
\]
A set $A\in\mathcal{B}(\Omega_{T})$ is said to be {polar} if $c(A)=0$. A
property is said to {hold \textquotedblleft quasi-surely\textquotedblright}
({q.s.}) if it holds outside a polar set. In the following, we do not
distinguish between two random variables $X$ and $Y$ if $X=Y$ q.s.

Set
\[
\mathcal{L}(\Omega_{T}):=\{X\in\mathcal{B}(\Omega_{T}):E_{P}[X]\ \text{exists
	for each }\ P\in\mathcal{P}\}.
\]
We extend the $G$-expectation to $\mathcal{L}(\Omega_{T})$, still denote it by
$\hat{\mathbb{E}}$, by setting
\[
\hat{\mathbb{E}}[X]:=\sup_{P\in\mathcal{P}}E_{P}[X],\ \ \ \ \text{for}%
\ X\in\mathcal{L}(\Omega_{T}).
\]
Then clearly, $L_{G}^{p}(\Omega_{T})\subset\mathcal{L}(\Omega_{T})$.

\begin{definition}A real function $X$ on $\Omega_T$ is said to be quasi-continuous if for each $\varepsilon>0$,
	there exists an open set $O$ with $c(O)<\varepsilon$ such that
	$X|_{O^{c}}$ is continuous.
\end{definition}
\begin{definition}  We say that  $X:\Omega_T\mapsto\mathbb{R}$ has a quasi-continuous
	version if there exists a quasi-continuous function
	$Y:\Omega_T\mapsto\mathbb{R}$ such that $X = Y$, q.s..
\end{definition}
Then we have the following characterization of the space   $L^p_G(\Omega_T)$, which can be seen as a counterpart of Lusin's theorem in the nonlinear expectation theory.
\begin{theorem}
	\label{LG characterization}For each $p\geq 1$, we have
	\begin{align*}
	L_{G}^{p}(\Omega_T)=\{X\in \mathcal{B}(\Omega_T)\ :\ \
	&\lim\limits_{N\rightarrow\infty}\mathbb{\hat{E}}[|X|^pI_{|X|\geq
		N}]=0 \
	\text{and}\  X\ \text {has a quasi-continuous version}\}.
	\end{align*}
\end{theorem}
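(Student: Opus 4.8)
The plan is to prove the two inclusions separately, with the forward inclusion $L_G^p(\Omega_T)\subseteq\{\cdots\}$ being the routine one and the reverse inclusion carrying the real analytic content.

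For the forward direction, take $X\in L_G^p(\Omega_T)$ and a sequence $X_n\in L_{ip}(\Omega_T)$ with $\|X-X_n\|_p\to 0$. Each $X_n$ is a bounded function of finitely many increments, hence bounded and continuous on $\Omega_T$, in particular quasi-continuous. To obtain a quasi-continuous version of $X$ I would pass to a fast subsequence with $\hat{\mathbb{E}}[|X-X_{n_k}|^p]\le 2^{-(p+1)k}$, apply the Chebyshev inequality for the capacity, $c(|X-X_{n_k}|>2^{-k})\le 2^{kp}\hat{\mathbb{E}}[|X-X_{n_k}|^p]\le 2^{-k}$, and a Borel--Cantelli argument together with the outer regularity of $c$ to produce, for every $\varepsilon$, an open set $O$ with $c(O)<\varepsilon$ off which $X_{n_k}\to X$ uniformly; a uniform limit of continuous functions is continuous on $O^c$, so $X$ has a quasi-continuous version. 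For the tail condition, fix $\varepsilon$ and choose $n$ with $\hat{\mathbb{E}}[|X-X_n|^p]<\varepsilon$; from $|X|^pI_{\{|X|\ge N\}}\le 2^{p-1}|X-X_n|^p+2^{p-1}|X_n|^pI_{\{|X|\ge N\}}$ and $|X_n|\le C_n$ one gets $\hat{\mathbb{E}}[|X|^pI_{\{|X|\ge N\}}]\le 2^{p-1}\varepsilon+2^{p-1}C_n^p\,c(|X|\ge N)$, and since $c(|X|\ge N)\le N^{-p}\hat{\mathbb{E}}[|X|^p]\to 0$, the $\limsup$ in $N$ is at most $2^{p-1}\varepsilon$, whence the limit is $0$.

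For the reverse direction, let $X$ be Borel, satisfy the tail condition, and admit a quasi-continuous version, which we may assume to be $X$ itself. First I would truncate: for $X_N:=(-N)\vee(X\wedge N)$ one has $|X-X_N|^p\le |X|^pI_{\{|X|\ge N\}}$, so the tail condition gives $\hat{\mathbb{E}}[|X-X_N|^p]\to 0$; since truncation preserves quasi-continuity this reduces the problem to a bounded, quasi-continuous $X$. Next, for such an $X$ with $|X|\le C$ and any $\varepsilon>0$, pick an open $O$ with $c(O)<\varepsilon$ and $X|_{O^c}$ continuous; as $\Omega_T$ is a Polish (hence normal) space, Tietze's extension theorem yields a bounded continuous $Z$ on all of $\Omega_T$ with $\|Z\|_\infty\le C$ and $Z=X$ on $O^c$, so that $\hat{\mathbb{E}}[|X-Z|^p]\le (2C)^p c(O)<(2C)^p\varepsilon$. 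It therefore suffices to approximate a bounded continuous functional $Z$ on $\Omega_T$ by elements of $L_{ip}(\Omega_T)$.

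This last step is where the main difficulty lies, because no dominated convergence theorem is available for $\hat{\mathbb{E}}$ in general; the device that saves the day is the weak compactness of $\mathcal{P}$ from Theorem \ref{DHP representation}, which by Prokhorov's theorem makes $\mathcal{P}$ tight. Concretely, I would replace $\omega$ by its piecewise-linear interpolation $\omega^{\pi}$ along a partition $\pi=\{t_1,\dots,t_n\}$, so that $Z(\omega^\pi)=\psi_\pi(B_{t_1},\dots,B_{t_n})$ for a bounded continuous $\psi_\pi$; on any compact $K\subseteq\Omega_T$ the interpolations converge uniformly by Arzel\`a--Ascoli and the uniform continuity of $Z$ on $K$, while tightness provides a compact $K$ with $c(K^c)<\varepsilon$. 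Splitting $\hat{\mathbb{E}}[|Z-\psi_\pi(B_{t_1},\dots,B_{t_n})|^p]$ over $K$ and $K^c$ then forces it to $0$ as the mesh shrinks. Finally, mollifying $\psi_\pi$ gives $C_{b.Lip}$ functions converging to $\psi_\pi$ uniformly on compacts, and one more appeal to tightness places the approximants in $L_{ip}(\Omega_T)$ and closes the argument. The crux throughout is that weak compactness of $\mathcal{P}$ substitutes for the missing dominated convergence, allowing every limiting step to be localized to compact path sets where convergence is uniform.
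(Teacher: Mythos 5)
The first thing to note is that the paper itself contains no proof of Theorem \ref{LG characterization}: it is quoted as a known result (the Denis--Hu--Peng characterization, cf.\ \cite{D-H-P}), so your proposal can only be judged against the standard proof from the literature. Your architecture reproduces that standard proof: the forward inclusion via norm-approximation from $L_{ip}(\Omega_T)$, the Markov/Chebyshev inequality for $c$, and Borel--Cantelli; the reverse inclusion via truncation, Tietze extension on the Polish space $\Omega_T$, and approximation of bounded continuous functionals by Lipschitz cylinder functions, with tightness of $\mathcal{P}$ (Prokhorov, from the weak compactness in Theorem \ref{DHP representation}) standing in for the missing dominated convergence. The reverse direction, which carries the real content, is correct as outlined, including the key estimates $\hat{\mathbb{E}}[|X-Z|^p]\leq (2C)^p c(O)$ and the splitting of $\hat{\mathbb{E}}[|Z-Z(\cdot^{\pi})|^p]$ over $K$ and $K^c$.

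There is, however, one genuine gap, in the forward direction. The sets $\{|X-X_{n_k}|>2^{-k}\}$ are not open, because $X$ is merely Borel, and you pass to open sets by invoking ``outer regularity of $c$''. That property is not available: it is stated nowhere in the paper, and it does not follow from the representation $c=\sup_{P\in\mathcal{P}}P$ --- outer regularity of each individual $P$ does not survive the supremum, since the approximating open set depends on $P$; what weak compactness of $\mathcal{P}$ actually yields is continuity of $c$ from above along decreasing sequences of \emph{closed} sets (hence Choquet capacitability, i.e.\ inner approximation of Borel sets by compacts), which is a different statement. The standard repair is cheap and keeps your structure intact: work instead with $B_k:=\{|X_{n_{k+1}}-X_{n_k}|>2^{-k}\}$, which \emph{are} open because the approximants $X_{n_k}$ are continuous; for a fast subsequence $\sum_k c(B_k)<\infty$, so on the complement of the open set $G_m=\bigcup_{k\geq m}B_k$ the sequence $X_{n_k}$ converges uniformly, its everywhere-defined limit $Y$ is quasi-continuous, and $Y=X$ q.s.\ (e.g.\ from $c(|X-Y|>\varepsilon)\leq c(|X-X_{n_k}|>\varepsilon/2)+c(|Y-X_{n_k}|>\varepsilon/2)\rightarrow 0$ and countable subadditivity). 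A more minor imprecision: in the cylinder-approximation step, the compact set on which $Z$ is uniformly continuous should be taken to contain not only $K$ but also all interpolants $\omega^{\pi}$ of paths $\omega\in K$ (this family is uniformly bounded and equicontinuous, hence precompact); otherwise the estimate $\sup_{\omega\in K}|Z(\omega^{\pi})-Z(\omega)|\rightarrow 0$ is not literally justified.
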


\begin{definition}
A process is a family of random variables $X=(X_{t})_{t\in\lbrack0,T]}$ such
that for all $t\in\lbrack0,T]$, $X_{t}\in L_{G}^{1}(\Omega_{T}).$ We say that
process $Y$ is a modification of process $X$ if for each $t\in\lbrack0,T],$
$X_{t}=Y_{t},$ q.s.
\end{definition}

The following is the Kolmogorov criterion for continuous modification with
respect to capacity

\begin{lemma}
\label{modification lemma} Let $(X_{t})_{t\in\lbrack0,T]}$ be a process taking
Banach values. Assume that there exist positive constants $\alpha,\beta$ and
$c$ such that
\[
\hat{\mathbb{E}}[|X_{t}-X_{s}|^{\alpha}]\leq c|t-s|^{1+\beta}.
\]
Then $X$ admits a continuous modification $\tilde{X}$ such that
\[
\hat{\mathbb{E}}\left[  \left(  \sup_{s\neq t}\displaystyle\frac{|\tilde
{X}_{t}-\tilde{X}_{s}|}{|t-s|^{\gamma}}\right)  ^{\alpha}\right]  <\infty,
\]
for every $\gamma\in(0,\beta/\alpha)$.
\end{lemma}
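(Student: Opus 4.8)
The plan is to run the classical Kolmogorov--Chentsov dyadic argument, replacing $P$-a.s.\ statements and $E_P$ by quasi-sure statements and $\hat{\mathbb{E}}$ throughout; the representation $\hat{\mathbb{E}}=\sup_{P\in\mathcal{P}}E_P$ of Theorem \ref{DHP representation} is what makes this legitimate, since it gives countable subadditivity of both $\hat{\mathbb{E}}$ and the capacity $c(\cdot)$, together with the Markov inequality $c(\{|Y|\ge\lambda\})\le\lambda^{-\alpha}\hat{\mathbb{E}}[|Y|^\alpha]$. Fix $\gamma\in(0,\beta/\alpha)$, write the hypothesis as $\hat{\mathbb{E}}[|X_t-X_s|^\alpha]\le c|t-s|^{1+\beta}$, and let $C$ denote a constant depending only on $c,\alpha,\beta,\gamma,T$ that may change from line to line. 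I would introduce the dyadic grids $t^n_k:=kT2^{-n}$, $D_n:=\{t^n_k:0\le k\le 2^n\}$, $D:=\bigcup_n D_n$, and set $K_n:=\max_{1\le k\le 2^n}|X_{t^n_k}-X_{t^n_{k-1}}|$.

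First I would estimate a single dyadic increment: the hypothesis and Markov give $c(\{|X_{t^n_k}-X_{t^n_{k-1}}|\ge 2^{-\gamma n}\})\le 2^{\gamma\alpha n}\,c\,(T2^{-n})^{1+\beta}$, so summing the $2^n$ consecutive pairs at level $n$ by subadditivity yields $c(\{K_n\ge 2^{-\gamma n}\})\le C\,2^{-n(\beta-\gamma\alpha)}$. Since $\gamma<\beta/\alpha$ the exponent $\beta-\gamma\alpha$ is strictly positive, hence these capacities are summable in $n$, and the Borel--Cantelli lemma for capacity (immediate from countable subadditivity) shows that q.s.\ there is $N(\omega)$ with $K_n<2^{-\gamma n}$ for all $n\ge N(\omega)$. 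On this full-capacity event the standard telescoping/chaining argument gives, for dyadic $s\ne t$, the bound $|X_t-X_s|\le C_\gamma|t-s|^\gamma$; in particular $t\mapsto X_t$ is uniformly continuous on the dense set $D$, so by completeness of the Banach space it extends to a continuous map on $[0,T]$. I define $\tilde{X}_t$ to be this extension on the good event and $\tilde{X}_t:=0$ on the polar exceptional set.

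Next I would check that $\tilde{X}$ is a modification. For fixed $t$ pick dyadic $s_n\to t$; the hypothesis gives $\hat{\mathbb{E}}[|X_{s_n}-X_t|^\alpha]\le c|s_n-t|^{1+\beta}\to0$, hence $X_{s_n}\to X_t$ in capacity, so a fast subsequence converges to $X_t$ q.s.; but that same subsequence converges to $\tilde{X}_t$ q.s.\ by construction, whence $\tilde{X}_t=X_t$ q.s. Finally, for the quantitative H\"older bound I would make the chaining estimate quantitative: the same telescoping shows $\sup_{s\ne t\in D}|X_t-X_s|/|t-s|^\gamma\le C_\gamma\,S$ with $S:=\sup_{n\ge0}2^{\gamma n}K_n$, and by continuity of $\tilde X$ the supremum over all $s\ne t$ in $[0,T]$ equals the supremum over $D$. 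It then remains to bound $\hat{\mathbb{E}}[S^\alpha]$: using $S^\alpha\le\sum_n 2^{\gamma\alpha n}K_n^\alpha$, countable subadditivity of $\hat{\mathbb{E}}$, and $\hat{\mathbb{E}}[K_n^\alpha]\le\sum_{k=1}^{2^n}\hat{\mathbb{E}}[|X_{t^n_k}-X_{t^n_{k-1}}|^\alpha]\le cT^{1+\beta}2^{-n\beta}$, I obtain $\hat{\mathbb{E}}[S^\alpha]\le C\sum_n 2^{-n(\beta-\gamma\alpha)}<\infty$, which yields the claimed finiteness.

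The routine parts are the chaining inequality and the geometric-series bookkeeping, which are identical to the linear theory. The point that needs care---and the only genuinely non-classical step---is that every exceptional-set and interchange-of-limits argument must be carried out directly at the level of the capacity $c$ and the sublinear expectation $\hat{\mathbb{E}}$ rather than measure by measure, because a continuity modification chosen $P$-by-$P$ need not paste into a single quasi-surely continuous process. The whole argument hinges on the countable subadditivity of $c$ and $\hat{\mathbb{E}}$ supplied by the representation, so the main obstacle is simply to keep all estimates subadditive and to justify the passage from $L^\alpha$-convergence to quasi-sure convergence along a subsequence.
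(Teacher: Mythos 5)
Your proof is correct, but note that the paper itself offers no proof of this lemma: it is stated as a known result (the capacity version of Kolmogorov's continuity criterion, available in the $G$-expectation literature, e.g.\ Denis--Hu--Peng \cite{D-H-P} or Peng \cite{Peng 1}), so there is no in-paper argument to compare against. Your dyadic chaining argument is essentially the standard proof of that quoted result, and you correctly isolate the only genuinely non-classical ingredients: (i) all exceptional-set manipulations are carried out directly at the level of the capacity $c=\sup_{P\in\mathcal{P}}P$ and the extended expectation $\hat{\mathbb{E}}=\sup_{P\in\mathcal{P}}E_{P}$ on Borel functions, whose countable subadditivity (hence Markov and Borel--Cantelli) follows from the representation together with monotone convergence under each $P$ --- this is exactly what rules out the illegitimate shortcut of choosing a modification $P$-by-$P$; and (ii) the verification that $\tilde{X}$ is a modification requires upgrading convergence in $\hat{\mathbb{E}}[|\cdot|^{\alpha}]$ to quasi-sure convergence along a subsequence, which again only uses countable subadditivity. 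Two minor remarks. First, your qualitative Borel--Cantelli step is actually redundant: the quantitative bound $\hat{\mathbb{E}}[S^{\alpha}]<\infty$ with $S=\sup_{n}2^{\gamma n}K_{n}$ already gives $S<\infty$ q.s., and the chaining inequality then yields uniform continuity of $X$ on the dyadics q.s.\ in one stroke. Second, in the sentence claiming $|X_{t}-X_{s}|\leq C_{\gamma}|t-s|^{\gamma}$ on the good event, the constant produced by the Borel--Cantelli route depends on $\omega$ through $N(\omega)$ (or the estimate is restricted to $|t-s|\leq 2^{-N(\omega)}$); this is harmless for constructing the extension, and your later bound via $C_{\gamma}S$ is the correct $\omega$-uniform formulation.
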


Now we give the definition of $G$-martingales.

\begin{definition}
A process $\{M_{t}\}$ is called a $G$-martingale if $M_{t}\in L_{G}^{1}%
(\Omega_{t})$ and $\hat{\mathbb{E}}_{s}(M_{t})=M_{s}$ for any $s\leq t$. If
$\{M_{t}\}$ and $\{-M_{t}\}$ are both $G$-martingales, we call $\{M_{t}\}$ a
symmetric $G$-martingale.
\end{definition}

\begin{remark}
\upshape{
If $M$ is a symmetric $G$-martingale, then under each $P$, it is a classical martingale.}
\end{remark}

In the following, we give the stochastic calculus with respect to a kind of
martingales as well as its quadratic variation process. In this paper, we
always assume that $M$ is a symmetric martingale satisfying:

\begin{description}
\item[(H)] $M_{t}\in{L}_{G}^{2}(\Omega_{t})$ for each $t\geq0$ and there
exists a nonnegative constant $\Lambda>0$ such that
\begin{equation}
\hat{\mathbb{E}}_{t}[|M_{t+s}-M_{t}|^{2}]\leq\Lambda s,\ \ \ \ \text{ for each
}t,s\geq0. \label{M}%
\end{equation}
\end{description}
\begin{remark}
\upshape{	For $M_{t}\in{L}_{G}^{2}(\Omega_{t})$, by the $G$-martingale representatin theorem (see, e.g., \cite{Song1,STZ}), $M_{t}$ can be represented as the integral of $G$-Brownian motion. From this, we know that $M_{t}$ is continuous.}
\end{remark}
For each $T>0$ and $p\geq1$, we define
\begin{align*}
M_{G}^{p,0}(0,T):=  &  \{\eta=\sum_{j=0}^{N-1}\xi_{j}(\omega)I_{[t_{j}%
,t_{j+1})}(t):N\in\mathbb{N},\ 0\leq t_{0}\leq t_{1}\leq\cdots\leq t_{N}\leq
T,\\
&  \ \xi_{j}\in L_{G}^{p}(\Omega_{t_{j}}),\ j=0,1\cdots,N\}.
\end{align*}
For each $\eta\in M_{G}^{p,0}(0,T)$, set the norm $\Vert\eta\Vert_{M_{G}^{p}%
}:=(\hat{\mathbb{E}}[\int_{0}^{T}|\eta_{t}|^{p}dt])^{\frac{1}{p}}$ and denote
by $M_{G}^{p}(0,T)$ the completion of $M_{G}^{p,0}(0,T)$ under $\Vert
\cdot\Vert_{M_{G}^{p}}$.

For $\eta\in M_{G}^{2,0}(0,T)$, define the stochastic integral with respect to
$M$ by
\[
I(\eta)=\int_{0}^{T}\eta_{t}dM_{t}:=\sum_{j=0}^{N-1}\xi_{t_{j}}(M_{t_{j+1}%
}-M_{t_{j}}): \ M_{G}^{2,0}(0,T)\rightarrow{L}_{G}^{2}(\Omega_{T}).
\]
The proof of following lemma is the same as that of Lemma 3.5 in Chap. III of  \cite{Peng
1}, so we omit it.

\begin{lemma}
\label{control lemma} For each $\eta\in M_{G}^{2,0}(0,T)$, we have
\begin{equation}
\hat{\mathbb{E}}[|\int_{0}^{T}\eta_{t}dM_{t}|^{2}]\leq C\hat{\mathbb{E}}%
[\int_{0}^{T}|\eta_{t}|^{2}dt]. \label{control eq}%
\end{equation}
for some constant $C\geq0$.
\end{lemma}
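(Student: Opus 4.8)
My plan is to reduce the sublinear estimate to a family of classical ones through the representation theorem, exploiting that on each underlying measure a symmetric $G$-martingale is an ordinary martingale. Writing $\eta=\sum_{j=0}^{N-1}\xi_j I_{[t_j,t_{j+1})}$ with $\xi_j\in L_G^2(\Omega_{t_j})$ and $\Delta_j:=M_{t_{j+1}}-M_{t_j}$, so that $\int_0^T\eta_t\,dM_t=\sum_{j=0}^{N-1}\xi_j\Delta_j$, I would invoke Theorem \ref{DHP representation} and aim to establish, for a constant $C$ independent of $\eta$ and of $P$,
\[
E_P\Big[\big|{\textstyle\sum_j\xi_j\Delta_j}\big|^2\Big]\le C\,E_P\Big[\int_0^T|\eta_t|^2\,dt\Big]\qquad\text{for every }P\in\mathcal{P}.
\]
Taking the supremum over $P\in\mathcal{P}$ on both sides then finishes the proof, since $\int_0^T|\eta_t|^2\,dt=\sum_j(t_{j+1}-t_j)\xi_j^2$ makes the right-hand side equal to $\hat{\mathbb{E}}[\int_0^T|\eta_t|^2\,dt]$ after the supremum.

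To obtain the per-measure bound I would fix $P\in\mathcal{P}$ and use that, as recorded in the Remark above, $M$ is a genuine $P$-martingale, so $\{\xi_j\Delta_j\}$ are orthogonal martingale increments under $P$: for $i<j$, conditioning on $\mathcal{F}_{t_j}$ and using $E_P[\Delta_j\mid\mathcal{F}_{t_j}]=0$ annihilates the cross term $E_P[\xi_i\Delta_i\,\xi_j\Delta_j]$. The square therefore expands diagonally,
\[
E_P\Big[\big|{\textstyle\sum_j\xi_j\Delta_j}\big|^2\Big]=\sum_{j}E_P\big[\xi_j^2\,E_P[\Delta_j^2\mid\mathcal{F}_{t_j}]\big],
\]
and it remains only to control the conditional second moment of each increment by hypothesis (H).

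Here the crucial input is the domination of conditional $P$-expectations by the conditional $G$-expectation, $E_P[X\mid\mathcal{F}_t]\le\hat{\mathbb{E}}_t[X]$ $P$-a.s. for every $P\in\mathcal{P}$, which holds because the representing family $\mathcal{P}$ is stable under pasting. Applying it with $X=|M_{t_{j+1}}-M_{t_j}|^2$ and using (\ref{M}) gives $E_P[\Delta_j^2\mid\mathcal{F}_{t_j}]\le\Lambda(t_{j+1}-t_j)$ $P$-a.s.; substituting and summing would yield
\[
E_P\Big[\big|{\textstyle\sum_j\xi_j\Delta_j}\big|^2\Big]\le\Lambda\sum_j(t_{j+1}-t_j)E_P[\xi_j^2]=\Lambda\,E_P\Big[\int_0^T|\eta_t|^2\,dt\Big],
\]
so the lemma follows with $C=\Lambda$.

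I expect the main obstacle to be precisely this conditional domination inequality, which is where the nonlinearity is confronted and which rests on the consistency/pasting property of $\mathcal{P}$ rather than on any pathwise computation. A purely intrinsic route — iterating the tower property of $\hat{\mathbb{E}}_t$ and trying to discard the cross terms directly — looks awkward, since sub-additivity does not permit the conclusion $\hat{\mathbb{E}}_t[\zeta\,\Delta_j]=0$ for a general $\mathcal{F}_t$-measurable $\zeta$ even though $\hat{\mathbb{E}}_t[\pm\Delta_j]=0$; this is exactly why passing to each fixed $P$, where martingale-transform orthogonality is available, is the efficient route, as in the argument of Lemma 3.5, Chap. III of \cite{Peng 1} quoted above.
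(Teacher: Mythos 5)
Your argument is correct in substance, but it takes a genuinely different route from the proof the paper relies on. The paper omits the proof and points to Lemma 3.5 in Chap.~III of \cite{Peng 1}, which is an \emph{intrinsic} argument: expand $\bigl(\sum_j\xi_j\Delta_j\bigr)^2$ and delete the cross terms inside $\hat{\mathbb{E}}$ itself, using that for $i<j$ the variable $Y=2\xi_i\Delta_i\xi_j\Delta_j$ satisfies $\hat{\mathbb{E}}_{t_j}[Y]=\hat{\mathbb{E}}_{t_j}[-Y]=0$ and that such symmetric terms may be added under $\hat{\mathbb{E}}$ without changing its value; the diagonal terms are then estimated by backward induction via the tower property, conditional positive homogeneity, and hypothesis (H), giving the same constant $C=\Lambda$ that you obtain. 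You instead reduce to each $P\in\mathcal{P}$ through Theorem \ref{DHP representation} and invoke classical orthogonality of martingale increments. What your route buys: once $P$ is fixed, every manipulation is classical. What it costs: the domination $E_P[X\mid\mathcal{F}_t]\le\hat{\mathbb{E}}_t[X]$ $P$-a.s.\ is a nontrivial theorem resting on the stability under pasting of $\mathcal{P}$ (it can be extracted from \cite{STZ,Song1}), and it is stated nowhere in this paper; so at the one point where nonlinearity must be confronted, your proof appeals to machinery heavier than the lemma itself, whereas the cited proof uses only elementary properties of $\hat{\mathbb{E}}_t$. Two smaller points: you should make explicit that, with $\xi_j$ merely in $L_{G}^{2}(\Omega_{t_j})$, the cross terms are products of four square-integrable variables, so their $P$-integrability (needed for the tower property) requires justification — your bound $E_P[\Delta_j^2\mid\mathcal{F}_{t_j}]\le\Lambda(t_{j+1}-t_j)$ supplies exactly this; and the final supremum over $P$ identifies $\sup_P E_P$ with $\hat{\mathbb{E}}$ on the left side because $\int_0^T\eta_t\,dM_t\in L_G^2(\Omega_T)$, which is worth saying.

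Your closing assertion — that an intrinsic proof is blocked because sub-additivity cannot yield $\hat{\mathbb{E}}_t[\zeta\Delta_j]=0$ for general $\mathcal{F}_t$-measurable $\zeta$ — is incorrect, and it is precisely the step carried out in the proof the paper cites. Conditional $G$-expectation satisfies
\begin{equation*}
\hat{\mathbb{E}}_t[\zeta X]=\zeta^{+}\hat{\mathbb{E}}_t[X]+\zeta^{-}\hat{\mathbb{E}}_t[-X]
\end{equation*}
for $\mathcal{F}_t$-measurable $\zeta$; since $M$ and $-M$ are both $G$-martingales, $\hat{\mathbb{E}}_{t_j}[\Delta_j]=\hat{\mathbb{E}}_{t_j}[-\Delta_j]=0$, whence $\hat{\mathbb{E}}_{t_j}[\zeta\Delta_j]=\hat{\mathbb{E}}_{t_j}[-\zeta\Delta_j]=0$. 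The cross terms therefore vanish under $\hat{\mathbb{E}}$ directly, with no choice of a measure $P$; the nonlinearity is handled by splitting $\zeta$ into its positive and negative parts, not by pasting.
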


By the above lemma, we can extend the integral continuously to $M_{G}%
^{2}(0,T)$. For $\eta\in M_{G}^{2}(0,T)$, there exists a sequence $\{\eta
^{n}\}\subset M_{G}^{2,0}(0,T)$ such that $\eta^{n}\rightarrow\eta$ in $M_{G}%
^{2}(0,T)$. By Lemma
\ref{control lemma}, $\int_{0}^{T}\eta_{t}^{n}dM_{t}$ is Cauchy sequence in
${L}_{G}^{2}(\Omega_{T})$, we define
\[
\int_{0}^{T}\eta_{t}dM_{t}:={L}_{G}^{2}-\lim_{n\rightarrow\infty}\int_{0}%
^{T}\eta_{t}^{n}dM_{t}.
\]
It's easy to see $\int_{0}^{t}\eta_{s}dM_{s}$ is a symmetric $G$-martingale
and (\ref{control eq}) still holds.

Next we consider the quadratic variation of $M$. Let $\pi_{t}^{N}=\{t_{0}%
^{N},...,t_{N}^{N}\}$ be a partition of $[0,t]$ and denote
\[
\mu(\pi_{t}^{N}):=\max\{|t_{j+1}^{N}-t_{j}^{N}|:j=0,1,\cdots,N-1\}.
\]
Consider
\begin{align*}
\sum_{j=0}^{N-1}(M_{t_{j+1}^{N}}-M_{t_{j}^{N}})^{2}  &  =\sum_{j=0}%
^{N-1}(M_{t_{j+1}^{N}}^{2}-M_{t_{j}^{N}}^{2})-2\sum_{j=0}^{N-1}M_{t_{j}^{N}%
}(M_{t_{j+1}^{N}}-M_{t_{j}^{N}})\\
&  =M_{t}^{2}-2\sum_{j=0}^{N-1}M_{t_{j}^{N}}(M_{t_{j+1}^{N}}-M_{t_{j}^{N}}).
\end{align*}
Letting $\mu(\pi_{t}^{N})\rightarrow0$, the right side converges to $M_{t}%
^{2}-2\int_{0}^{t}M_{s}dM_{s}$ in ${L}_{G}^{2}(\Omega_{T})$. So
\[
\sum_{j=0}^{N-1}(M_{t_{j+1}^{N}}-M_{t_{j}^{N}})^{2}{\longrightarrow}M_{t}%
^{2}-2\int_{0}^{t}M_{t}dM_{t}\ \ \ \ \text{in}\ {L}_{G}^{2}(\Omega_{T}).
\]
We call this limit the quadratic variation of $M$ and denote it by $\langle
M\rangle_{t}$. By the definition of $\langle M\rangle_{t},$ it is easy to
obtain, for each $t,s\geq0$,
\begin{equation}
\hat{\mathbb{E}}_{t}[|\langle M\rangle_{t+s}-\langle M\rangle_{t}|
]=\hat{\mathbb{E}}_{t}[\langle M\rangle_{t+s}-\langle M\rangle_{t}%
]=\hat{\mathbb{E}}_{t}[M_{t+s}^{2}-M_{t}{}^{2}]=\hat{\mathbb{E}}_{t}%
[|M_{t+s}-M_{t}|^{2}]\leq\Lambda s. \label{M control}%
\end{equation}

\begin{remark}
\upshape{ Note that $\langle
		M\rangle_{t}$ is q.s. defined, and under each $P\in\mathcal{P}$, it is also the classical quadratic variation $\langle
		M\rangle^P_{t}$ of martingale $M$.
}
\end{remark}

\section{Main results}

We first introduce a bigger integrand space for the stochastic calculus of $G$-martingales $M$. This space plays an important role in the construction of local time.

For $p\geq1$ and $\eta\in M_{G}^{p,0}(0,T)$, we define a new norm $\left\vert
\left\vert \eta\right\vert \right\vert _{\bar{M}_{G}^{p}}=(
\hat{\mathbb{E}}[  \int_{0}^{T}\left\vert \eta_{t}\right\vert
^{p}d\langle M\rangle_{t}]  )  ^{\frac{1}{p}}$ and denote the
completion of $M_{G}^{p,0}(0,T)$ under the norm $\left\vert \left\vert
\cdot\right\vert \right\vert _{\bar{M}_{G}^{p}}$ by $\bar{M}_{G}^{p}(0,T)$.

We have the following result concerns the relationship between spaces $M_{G}^{p}(0,T)$ and $\bar{M}_{G}
^{p}(0,T)$.
\begin{lemma}
We have, 
\begin{equation}
\left\vert \left\vert \eta\right\vert \right\vert _{\bar{M}_{G}^{p}}%
\leq\Lambda^{\frac{1}{p}}\left\vert \left\vert \eta\right\vert \right\vert
_{M_{G}^{p}}\ \text{for each}\ \eta\in M_{G}^{p,0}(0,T), \ \ \ \text{and}\ \ \ \ M_{G}^{p}(0,T)\subset\bar{M}_{G}%
^{p}(0,T). \label{234423}%
\end{equation}
If moreover there exists a constant $0<\lambda\leq\Lambda$ such that
$\hat{\mathbb{E}}_{t}[|M_{t+s}-M_{t}|^{2}]\geq\lambda s$, then the quadratic
variation of martingales is non-degenerate, i.e., $\hat{\mathbb{E}}%
_{t}[\langle M\rangle_{t+s}-\langle M\rangle_{t}]\geq\lambda s$, which
implies
\[
\left\vert \left\vert \eta\right\vert \right\vert _{\bar{M}_{G}^{p}}%
\geq\lambda^{\frac{1}{p}}\left\vert \left\vert \eta\right\vert \right\vert
_{M_{G}^{p}}\ \text{for each}\ \eta\in M_{G}^{p,0}(0,T), \  \ \text{and}\ \ \ \ M_{G}^{p}(0,T)=\bar{M}_{G}^{p}(0,T).
\]

\end{lemma}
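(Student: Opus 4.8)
The plan is to establish the two inequalities on the dense subspace $M_{G}^{p,0}(0,T)$ by pointwise/integral estimates, then pass to the completions to identify the spaces. First I would fix $\eta=\sum_{j=0}^{N-1}\xi_{j}I_{[t_{j},t_{j+1})}\in M_{G}^{p,0}(0,T)$. The key identity to exploit is the control on increments of the quadratic variation, namely inequality (\ref{M control}), which gives $\hat{\mathbb{E}}_{t}[\langle M\rangle_{t+s}-\langle M\rangle_{t}]\leq\Lambda s$. I would compute
\[
\hat{\mathbb{E}}\Big[\int_{0}^{T}|\eta_{t}|^{p}\,d\langle M\rangle_{t}\Big]
=\hat{\mathbb{E}}\Big[\sum_{j=0}^{N-1}|\xi_{j}|^{p}\big(\langle M\rangle_{t_{j+1}}-\langle M\rangle_{t_{j}}\big)\Big],
\]
and then process the sum term-by-term from $j=N-1$ down to $0$ using the tower property of conditional $G$-expectation together with the fact that $|\xi_{j}|^{p}$ is $L_{G}^{1}(\Omega_{t_{j}})$-measurable, so it can be pulled outside $\hat{\mathbb{E}}_{t_{j}}[\cdot]$. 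Applying $\hat{\mathbb{E}}_{t_{j}}[\langle M\rangle_{t_{j+1}}-\langle M\rangle_{t_{j}}]\leq\Lambda(t_{j+1}-t_{j})$ at each step, a backward induction collapses the bound to $\Lambda\,\hat{\mathbb{E}}[\int_{0}^{T}|\eta_{t}|^{p}\,dt]=\Lambda\Vert\eta\Vert_{M_{G}^{p}}^{p}$, which is precisely the first inequality in (\ref{234423}).

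Once the inequality $\Vert\eta\Vert_{\bar{M}_{G}^{p}}\leq\Lambda^{1/p}\Vert\eta\Vert_{M_{G}^{p}}$ holds on the dense subspace, the inclusion $M_{G}^{p}(0,T)\subset\bar{M}_{G}^{p}(0,T)$ follows by a standard completion argument: any $\eta\in M_{G}^{p}(0,T)$ is an $\Vert\cdot\Vert_{M_{G}^{p}}$-limit of a sequence $\{\eta^{n}\}\subset M_{G}^{p,0}(0,T)$, which is therefore Cauchy in $\Vert\cdot\Vert_{M_{G}^{p}}$; by the inequality it is also Cauchy in $\Vert\cdot\Vert_{\bar{M}_{G}^{p}}$, hence converges to an element of $\bar{M}_{G}^{p}(0,T)$, and one checks the limit is well defined independently of the approximating sequence.

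For the non-degenerate case, the argument is entirely symmetric. From the hypothesis $\hat{\mathbb{E}}_{t}[|M_{t+s}-M_{t}|^{2}]\geq\lambda s$ together with the chain of equalities in (\ref{M control}) relating the conditional expectation of the quadratic-variation increment to that of the squared martingale increment, I would first deduce the lower bound $\hat{\mathbb{E}}_{t}[\langle M\rangle_{t+s}-\langle M\rangle_{t}]\geq\lambda s$. Then the same backward induction on the representation of $\eta$, now using this lower bound at each step, yields $\Vert\eta\Vert_{\bar{M}_{G}^{p}}^{p}\geq\lambda\Vert\eta\Vert_{M_{G}^{p}}^{p}$. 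Combining the two inequalities shows the norms $\Vert\cdot\Vert_{M_{G}^{p}}$ and $\Vert\cdot\Vert_{\bar{M}_{G}^{p}}$ are equivalent on $M_{G}^{p,0}(0,T)$, and since the two spaces are completions of the same dense subspace under equivalent norms, they coincide as sets, giving $M_{G}^{p}(0,T)=\bar{M}_{G}^{p}(0,T)$.

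The main obstacle I anticipate is handling the backward induction cleanly in the sublinear setting: because $\hat{\mathbb{E}}$ is only sub-additive rather than linear, one cannot simply interchange sums and expectations, and one must be careful to apply the tower property $\hat{\mathbb{E}}[\,\cdot\,]=\hat{\mathbb{E}}[\hat{\mathbb{E}}_{t_{N-1}}[\,\cdot\,]]$ at each stage and to verify that pulling the nonnegative $\mathcal{F}_{t_{j}}$-adapted factor $|\xi_{j}|^{p}$ out of the conditional expectation is legitimate. For the lower bound this is slightly more delicate, since one needs the monotonicity and positive-homogeneity properties of the conditional $G$-expectation to preserve the inequality direction through the iteration; I would verify that the non-degeneracy estimate on quadratic-variation increments transfers correctly from the stated estimate on squared martingale increments via the equalities already recorded in (\ref{M control}).
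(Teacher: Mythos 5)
Your proof of the first half is, in substance, the paper's own proof: the paper inserts the compensator $\Lambda(t_{j+1}-t_{j})$, splits by sub-additivity into a compensated sum plus $\Lambda\Vert\eta\Vert_{M_{G}^{p}}^{p}$, and kills each compensated term via the tower property, positive homogeneity (pulling the nonnegative $\mathcal{F}_{t_{j}}$-measurable factor $|\xi_{j}|^{p}$ out of $\hat{\mathbb{E}}_{t_{j}}$) and (\ref{M control}); your backward induction uses exactly these ingredients, arranged sequentially rather than in parallel, and your completion argument for $M_{G}^{p}(0,T)\subset\bar{M}_{G}^{p}(0,T)$ is the standard one.

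The genuine gap is in the second half, where you assert the argument is ``entirely symmetric''. It is not, because sub-additivity of $\hat{\mathbb{E}}_{t}$ only ever produces upper bounds. The first step of your induction does go through (tower property and positive homogeneity are exact equalities, so monotonicity transfers the bound $\hat{\mathbb{E}}_{t_{N-1}}[\langle M\rangle_{t_{N}}-\langle M\rangle_{t_{N-1}}]\geq\lambda(t_{N}-t_{N-1})$), but from the second step on you must bound \emph{from below} a conditional expectation of a sum $\hat{\mathbb{E}}_{t_{j}}[|\xi_{j}|^{p}(\langle M\rangle_{t_{j+1}}-\langle M\rangle_{t_{j}})+R]$, where $R$ collects the already-processed future terms and is not $\mathcal{F}_{t_{j}}$-measurable. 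The only available tool, $\hat{\mathbb{E}}_{t_{j}}[A+R]\geq\hat{\mathbb{E}}_{t_{j}}[R]-\hat{\mathbb{E}}_{t_{j}}[-A]$, requires a lower bound on the \emph{lower} conditional expectation $-\hat{\mathbb{E}}_{t_{j}}[-(\langle M\rangle_{t_{j+1}}-\langle M\rangle_{t_{j}})]$, which the displayed hypothesis (a lower bound on the \emph{upper} expectation) does not provide.

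No rearrangement can repair this, because the implication is false with the hypothesis read literally. Take $M=B$ a $G$-Brownian motion with $\underline{\sigma}^{2}=0<\bar{\sigma}^{2}=1$, so that $\hat{\mathbb{E}}_{t}[|B_{t+s}-B_{t}|^{2}]=s$ and the hypothesis holds with $\lambda=\Lambda=1$. For $\eta=I_{[0,1)}+\xi I_{[1,2)}$ with $|\xi|^{p}=K(1-\langle B\rangle_{1})$, $K>1$, conditioning at $t=1$ gives
\[
\Vert\eta\Vert_{\bar{M}_{G}^{p}}^{p}
=\hat{\mathbb{E}}\bigl[\langle B\rangle_{1}+K(1-\langle B\rangle_{1})\,\hat{\mathbb{E}}_{1}[\langle B\rangle_{2}-\langle B\rangle_{1}]\bigr]
=\hat{\mathbb{E}}\bigl[K-(K-1)\langle B\rangle_{1}\bigr]=K,
\]
since $\langle B\rangle_{1}$ vanishes under the zero-volatility scenario, while $\Vert\eta\Vert_{M_{G}^{p}}^{p}=\hat{\mathbb{E}}[1+K(1-\langle B\rangle_{1})]=1+K$; so the claimed inequality fails even though the intermediate conclusion $\hat{\mathbb{E}}_{t}[\langle M\rangle_{t+s}-\langle M\rangle_{t}]\geq\lambda s$ holds. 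The second half is salvageable only if ``non-degenerate'' is read in the lower-expectation sense, $-\hat{\mathbb{E}}_{t}[-|M_{t+s}-M_{t}|^{2}]\geq\lambda s$ (the chain of equalities in (\ref{M control}) does transfer to lower expectations, since the correction terms are symmetric-martingale increments), after which the induction runs with lower conditional expectations throughout. In fairness, the paper gives no proof of this half (``the proof for the second part is just similar'') and its statement carries the same upper/lower ambiguity; but your proposal claims the symmetric argument closes it, and it does not.
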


\begin{proof}
We just prove (\ref{234423}) since the proof for the second part is just
similar. We only need to prove $\left\vert \left\vert \eta\right\vert
\right\vert _{\bar{M}_{G}^{p}}\leq\Lambda^{\frac{1}{p}}\left\vert \left\vert
\eta\right\vert \right\vert _{M_{G}^{p}}\ $\ for $\eta=\sum_{j=0}^{N-1}\xi
_{j}(\omega)I_{[t_{j},t_{j+1})}(t)\in M_{G}^{p,0}(0,T).$ By the sub-linearity
of $\hat{\mathbb{E}}$, we have
\begin{equation}%
\begin{split}
\hat{\mathbb{E}}[  \int_{0}^{T}\left\vert \eta_{t}\right\vert
^{p}d\langle M\rangle_{t}]   &  =\hat{\mathbb{E}}[  \sum
_{j=0}^{N-1}|\xi_{j}(\omega)|^{p}(\langle M\rangle_{t_{j+1}}-\langle
M\rangle_{t_{j}})] \\
&  \leq\hat{\mathbb{E}}[  \sum_{j=0}^{N-1}|\xi_{j}(\omega)|^{p}(\langle
M\rangle_{t_{j+1}}-\langle M\rangle_{t_{j}}-\Lambda(t_{j+1}-t_{j}))]
+\Lambda\hat{\mathbb{E}}[  \sum_{j=0}^{N-1}|\xi_{j}(\omega)|^{p}%
(t_{j+1}-t_{j})]
\end{split}
\label{098765}%
\end{equation}
Note that
\begin{align*}
\hat{\mathbb{E}}[  \sum_{j=0}^{N-1}|\xi_{j}(\omega)|^{p}(\langle
M\rangle_{t_{j+1}}-\langle M\rangle_{t_{j}}-\Lambda(t_{j+1}-t_{j}))]
&  \leq\sum_{j=0}^{N-1}\hat{\mathbb{E}}[  |\xi_{j}(\omega)|^{p}(\langle
M\rangle_{t_{j+1}}-\langle M\rangle_{t_{j}}-\Lambda(t_{j+1}-t_{j}))] \\
&  =\sum_{j=0}^{N-1}\hat{\mathbb{E}}[  |\xi_{j}(\omega)|^{p}%
\hat{\mathbb{E}}_{t_{j}}[\langle M\rangle_{t_{j+1}}-\langle M\rangle_{t_{j}%
}-\Lambda(t_{j+1}-t_{j})]] \\
&  \leq0.
\end{align*}
Combining this with (\ref{098765}), we get the disired result.
\end{proof}

For $\eta\in\bar{M}_{G}^{2,0}(0,T)$, by a similar analysis as in Proposition
4.5 in Chap. III\ of \cite{Peng 1}, we have $$\hat{\mathbb{E}}[ ( \int%
_{0}^{T}\eta_{t}dM_{t})^2] \leq\hat{\mathbb{E}}[  \int_{0}%
^{T}\left\vert \eta_{t}\right\vert ^{2}d\langle M\rangle_{t}].$$ Then
the definition of integral $\int_{0}^{T}\eta_{t}dM_{t}$ can be extended continuously to $\bar{M}_{G}%
^{2}(0,T)$. Moreover, on $M_{G}^{2}(0,T)$, this definition 
 coincides with the one in Section 1.

In the following, $C$ always denotes a generic constant which is free to vary from line to line. By a standard argument, we can obtain a regular version of the stochastic integral.
\begin{proposition}
For $\eta\in\bar{M}_{G}^{2}(0,T),$ there exists a modification of
$\int_{0}^{t}\eta_{s}dM_{s}$ such that $t\rightarrow\int_{0}^{t}\eta_{s}%
dM_{s}$ is continuous.
\end{proposition}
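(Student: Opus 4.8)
The plan is to obtain the continuous modification via the Kolmogorov criterion of Lemma~\ref{modification lemma}, applied to the process $X_t := \int_0^t \eta_s\, dM_s$. The strategy mirrors the classical construction of a continuous version of a stochastic integral: I would establish a quantitative moment bound on the increments $X_t - X_s = \int_s^t \eta_u\, dM_u$ of the form $\hat{\mathbb{E}}[|X_t - X_s|^\alpha] \le c\,|t-s|^{1+\beta}$ for suitable exponents $\alpha, \beta > 0$, and then invoke the lemma to conclude the existence of a modification that is H\"older continuous (in particular continuous) in $t$.

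First I would reduce to a dense, tractable class of integrands. Since $\eta \in \bar{M}_G^2(0,T)$, there is a sequence $\eta^n \in M_G^{2,0}(0,T)$ with $\eta^n \to \eta$ in $\bar{M}_G^2(0,T)$; the simple integrands give piecewise-defined integrals that are manifestly continuous in $t$. The natural route is therefore to prove that the approximating integrals $X^n_t = \int_0^t \eta^n_s\, dM_s$ converge, uniformly in $t$ in an appropriate sense, to a continuous process. Concretely, I would aim for a bound of the form
\begin{equation*}
\hat{\mathbb{E}}\Big[\sup_{t\le T}\Big|\int_0^t (\eta^n_s - \eta^m_s)\, dM_s\Big|^2\Big] \le C\,\hat{\mathbb{E}}\Big[\int_0^T |\eta^n_s - \eta^m_s|^2\, d\langle M\rangle_s\Big],
\end{equation*}
which is a Doob-type maximal inequality combined with the isometry estimate $\hat{\mathbb{E}}[(\int_0^T \eta\, dM)^2] \le \hat{\mathbb{E}}[\int_0^T |\eta|^2\, d\langle M\rangle]$ already recorded before the Proposition. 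Since each $X^n$ is continuous in $t$ and the sequence is Cauchy in the uniform norm under $\hat{\mathbb{E}}$, a subsequence converges uniformly q.s. to a continuous limit, and this limit is a modification of $X_t$.

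Alternatively, and more in the spirit of the Kolmogorov criterion that the paper has prepared, I would bound a single increment directly. Using the isometry-type estimate and the control \eqref{M control} on the quadratic variation increments, I would estimate $\hat{\mathbb{E}}[|X_t - X_s|^{2}] \le \hat{\mathbb{E}}[\int_s^t |\eta_u|^2\, d\langle M\rangle_u]$ and push this to a higher moment $\hat{\mathbb{E}}[|X_t-X_s|^{2p}]$ for some $p > 1$ via a Burkholder--Davis--Gundy-type estimate under the $G$-expectation, so as to produce the exponent $1+\beta$ with $\beta > 0$ required by Lemma~\ref{modification lemma}. This gives the continuous modification directly and even yields H\"older regularity of the trajectories.

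The main obstacle will be the step that upgrades the second-moment control into a genuine $|t-s|^{1+\beta}$ bound with $\beta$ strictly positive: a naive second-moment estimate only gives an exponent of $1$ (since $\hat{\mathbb{E}}[\int_s^t |\eta_u|^2 d\langle M\rangle_u]$ is controlled linearly in $|t-s|$ only when $\eta$ is bounded), which is not enough to apply the Kolmogorov criterion. Overcoming this requires either a higher-moment (BDG-type) inequality valid under $\hat{\mathbb{E}}$ for integrals against the symmetric $G$-martingale $M$, or else falling back on the uniform-convergence-of-approximations argument sketched above, where continuity is inherited from the simple integrands and the maximal inequality replaces the need for a H\"older exponent. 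Given that the paper says ``by a standard argument,'' I expect the intended proof is the approximation route: reduce to simple $\eta^n$, whose integrals are continuous by construction, control the uniform distance between successive approximations by the $\bar{M}_G^2$ norm via the maximal inequality, and extract a q.s.-uniformly convergent subsequence whose continuous limit serves as the desired modification.
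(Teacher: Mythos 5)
Your fallback route is exactly the paper's proof: approximate $\eta$ by simple integrands $\eta^n$, whose integrals are continuous, bound $\hat{\mathbb{E}}[\sup_{t\le T}|I(\eta^n)_t-I(\eta^m)_t|^2]$ by the B-D-G/maximal inequality in terms of $\hat{\mathbb{E}}[\int_0^T|\eta^n-\eta^m|^2\,d\langle M\rangle_t]$, and then use the Markov inequality and Borel--Cantelli to extract a subsequence converging q.s.\ uniformly to a continuous limit, which is the desired modification. You also correctly diagnosed why your first-mentioned Kolmogorov-criterion route would fail for unbounded $\eta$ (no $|t-s|^{1+\beta}$ increment bound is available), so discarding it in favor of the approximation argument was the right call.
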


\begin{proof}
Denote $I(\eta)_{t}=\int_{0}^{t}\eta_{s}dM_{s}$. We can take a sequence
$\eta^{n}\in\bar{M}_{G}^{2,0}(0,T)$ such that $\eta^{n}\rightarrow\eta$ in
$\bar{M}_{G}^{2}(0,T).$ It is easy to see that $t\rightarrow\int_{0}^{t}%
\eta_{s}^{n}dM_{s}$ is continuous. By B-D-G inequality,
\begin{equation}\label{08878}
\mathbb{\hat{E}}[\sup_{0\leq t\leq T}|I(\eta^{n})_{t}-I(\eta^{m})_{t}%
|^{2}]\leq C\mathbb{\hat{E}}[\int_{0}^{T}|\eta^{n}-\eta^{m}|^{2}d\langle
M\rangle_{t}]
\end{equation}
From Markov inequality (see \cite{D-H-P}), for each $a>0,$
\begin{equation}\label{67675}
\mathbb{\hat{E}}[\sup_{0\leq t\leq T}|I(\eta^{n})_{t}-I(\eta^{m})_{t}|\geq
a]\leq\frac{1}{a^{2}}\mathbb{\hat{E}}[\sup_{0\leq t\leq T}|I(\eta^{n}%
)_{t}-I(\eta^{m})_{t}|^{2}]
\end{equation}
Combining (\ref{08878}) and (\ref{67675}), and using the Borel-Cantelli lemma (see \cite{Peng 1}), one can extract a subsequence
$I(\eta^{n_{k}})_{t}$ converging q.s. uniformly. We denote this limit by
$Y_{t}$, and it is a continuous modification of $I(\eta)_{t}.$
\end{proof}

\begin{remark}
\upshape{Henceforth, we will consider only the continuous modifications of $\int_{0}%
^{t}\eta_{s}dM_{s}.$}
\end{remark}

Let us state some characterization results for the space $\bar{M}_{G}%
^{p}(0,T).$ which are important for our future discussion.

\begin{lemma}
\label{lemma-1} Assume $X\in\bar{M}_{G}^{p}(0,T)$. Then for each $\varphi\in
C_{b.Lip}(\mathbb{R})$, we have $\varphi(X_{t})_{0\leq t\leq T}\in\bar{M}%
_{G}^{p}(0,T).$
\end{lemma}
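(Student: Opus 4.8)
The plan is to prove the lemma by approximation, transferring the elementary Lipschitz estimate for $\varphi$ from the approximating simple processes to the limit. Since $X\in\bar{M}_{G}^{p}(0,T)$, by definition there is a sequence of simple processes $X^{n}=\sum_{j=0}^{N_{n}-1}\xi_{j}^{n}I_{[t_{j}^{n},t_{j+1}^{n})}\in M_{G}^{p,0}(0,T)$ with $\Vert X^{n}-X\Vert_{\bar{M}_{G}^{p}}\to0$. Fix a Lipschitz constant $L$ of $\varphi$ and recall that $\varphi$ is bounded. The first step is to check that the natural candidate $\varphi(X^{n})=\sum_{j}\varphi(\xi_{j}^{n})I_{[t_{j}^{n},t_{j+1}^{n})}$ again lies in $M_{G}^{p,0}(0,T)$. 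This reduces to showing that $\varphi(\xi)\in L_{G}^{p}(\Omega_{t_{j}})$ whenever $\xi\in L_{G}^{p}(\Omega_{t_{j}})$: for $\xi=\psi(B_{s_{1}},\dots,B_{s_{k}})\in L_{ip}(\Omega_{t_{j}})$ this is clear because $\varphi\circ\psi\in C_{b.Lip}(\mathbb{R}^{k\times d})$, and for general $\xi$ one approximates by $L_{ip}$ random variables and invokes the pointwise bound $|\varphi(a)-\varphi(b)|\leq L|a-b|$ to see that the images form a Cauchy sequence in $L_{G}^{p}(\Omega_{t_{j}})$ whose limit is $\varphi(\xi)$.

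Second, I would establish the key Lipschitz estimate at the level of the $\bar{M}_{G}^{p}$-norm. For any two measurable processes $Y,Z$ of finite norm, the deterministic pointwise inequality $|\varphi(Y_{t}(\omega))-\varphi(Z_{t}(\omega))|^{p}\leq L^{p}|Y_{t}(\omega)-Z_{t}(\omega)|^{p}$ holds for all $(t,\omega)$; integrating against the nonnegative measure $d\langle M\rangle_{t}$ and using monotonicity of $\hat{\mathbb{E}}$ yields
\[
\Vert\varphi(Y)-\varphi(Z)\Vert_{\bar{M}_{G}^{p}}\leq L\,\Vert Y-Z\Vert_{\bar{M}_{G}^{p}}.
\]
Applying this with $Y=X^{n},Z=X^{m}$ shows that $\{\varphi(X^{n})\}$ is Cauchy in the complete space $\bar{M}_{G}^{p}(0,T)$, hence convergent there. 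Applying it with $Y=X^{n},Z=X$ shows $\varphi(X^{n})\to\varphi(X)$ in the same norm; this is legitimate because $\varphi(X)$ is bounded and measurable, so $\int_{0}^{T}|\varphi(X_{t})|^{p}d\langle M\rangle_{t}\leq\Vert\varphi\Vert_{\infty}^{p}\langle M\rangle_{T}$ has finite $G$-expectation (using $\hat{\mathbb{E}}[\langle M\rangle_{T}]\leq\Lambda T$ from (\ref{M control})), so $\varphi(X)$ sits in the ambient normed space where the seminorm is defined. By uniqueness of norm limits the two limits coincide, whence $\varphi(X)$ is the $\bar{M}_{G}^{p}$-limit of elements of $M_{G}^{p,0}(0,T)$ and therefore belongs to $\bar{M}_{G}^{p}(0,T)$.

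I expect the only genuine subtlety to be the first step: confirming that composing $\varphi$ with an $L_{G}^{p}(\Omega_{t_{j}})$ random variable stays in $L_{G}^{p}(\Omega_{t_{j}})$, so that each $\varphi(X^{n})$ is a bona fide member of $M_{G}^{p,0}(0,T)$ rather than merely a bounded measurable process. Everything else follows directly from the pointwise Lipschitz bound together with completeness of $\bar{M}_{G}^{p}(0,T)$; in particular no delicate quasi-sure or subsequential almost-everywhere convergence argument is required, since the estimate controls the norm of the difference outright. It is worth stating explicitly that $\varphi(X)$ denotes the process $t\mapsto\varphi(X_{t})$ for a fixed measurable representative of $X$, and that boundedness of $\varphi$ is exactly what guarantees this process has finite $\bar{M}_{G}^{p}$-norm.
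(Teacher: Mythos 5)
Your proposal is correct and takes essentially the same route as the paper: approximate $X$ by simple processes $X^{n}\in M_{G}^{p,0}(0,T)$, observe that $\varphi(X^{n})$ is again a simple process, and pass to the limit via the pointwise Lipschitz bound $|\varphi(X_{t})-\varphi(X_{t}^{n})|^{p}\leq L_{\varphi}^{p}|X_{t}-X_{t}^{n}|^{p}$ integrated against $d\langle M\rangle_{t}$. The only difference is a minor sub-step: where you check $\varphi(\xi)\in L_{G}^{p}(\Omega_{t_{j}})$ by a direct density/Cauchy argument from $L_{ip}(\Omega_{t_{j}})$, the paper instead invokes the quasi-continuity characterization of $L_{G}^{p}$ (Theorem \ref{LG characterization}); both are valid.
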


\begin{proof}
We can find a sequence $X_{t}^{n}=\sum_{j=0}^{N_{n}-1}\xi_{j}^{n}%
(\omega)I_{[t_{j}^{n},t_{j+1}^{n})}(t)$, where $\xi_{j}\in L_{G}^{p}%
(\Omega_{t_{j}^{n}})$, such that $X^{n}\rightarrow X$ under the norm
$\left\vert \left\vert \cdot\right\vert \right\vert _{\bar{M}_{G}^{p}}$. Note
that $\varphi(\xi_{j}^{n}(\omega))\in L_{G}^{p}(\Omega_{t_{j}^{n}})$ by
Theorem \ref{LG characterization}, then we have
\[
\varphi(X_{t}^{n})=\sum_{j=0}^{N_{n}-1}\varphi(\xi_{j}^{n}(\omega
))I_{[t_{j}^{n},t_{j+1}^{n})}(t)\in\bar{M}_{G}^{p,0}(0,T).
\]
Then the desired result follows from the observation that%
\[
\hat{\mathbb{E}}[\int_{0}^{T}|\varphi(X_{t})-\varphi(X_{t}^{n})|^{p}d\langle
M\rangle_{t}]\leq L_{\varphi}^{p}\hat{\mathbb{E}}[\int_{0}^{T}|X_{t}-X_{t}%
^{n}|^{p}d\langle M\rangle_{t}]\rightarrow0,\ \ \ \ \text{as}\ n\rightarrow
\infty,
\]
where $L_{\varphi}$ is the Lipschitz constant of $\varphi.$
\end{proof}

\begin{proposition}
	\label{lemma-2} Assume $\eta\in\bar{M}_{G}^{p}(0,T)$. Then $$\hat{\mathbb{E}}%
	[\int_{0}^{T}|\eta_t|^pI_{\{|\eta_t|> N\}}d\langle M\rangle_{t}]\rightarrow0,\  \ \ \ \text{as}\ N\rightarrow0.$$
\end{proposition}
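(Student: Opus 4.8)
The plan is to establish the tail estimate first for simple integrands and then transport it to the whole completion $\bar{M}_G^p(0,T)$ by density, using an elementary truncation inequality to bridge the two. Throughout I write the cut-off level as $K$ or $N$ and read the assertion as convergence when this level tends to infinity.

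For a simple process $\eta=\sum_{j=0}^{m-1}\xi_j(\omega)I_{[t_j,t_{j+1})}(t)\in M_G^{p,0}(0,T)$ with $\xi_j\in L_G^p(\Omega_{t_j})$, I would start from
$$\int_0^T|\eta_t|^pI_{\{|\eta_t|>K\}}\,d\langle M\rangle_t=\sum_{j=0}^{m-1}|\xi_j|^pI_{\{|\xi_j|>K\}}\bigl(\langle M\rangle_{t_{j+1}}-\langle M\rangle_{t_j}\bigr),$$
and run the conditioning argument from the proof of (\ref{234423}) with the nonnegative $\mathcal{F}_{t_j}$-measurable weight $|\xi_j|^pI_{\{|\xi_j|>K\}}$ in place of $|\xi_j|^p$. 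Since this weight need not lie in $L_G^1$, I would justify the step through the representation $\hat{\mathbb{E}}=\sup_{P\in\mathcal{P}}E_P$ of Theorem \ref{DHP representation}, applying for each $P$ the $P$-conditional form of the increment bound (\ref{M control}) against the nonnegative weight; together with sub-additivity this yields
$$\hat{\mathbb{E}}\Bigl[\int_0^T|\eta_t|^pI_{\{|\eta_t|>K\}}\,d\langle M\rangle_t\Bigr]\leq\Lambda\sum_{j=0}^{m-1}(t_{j+1}-t_j)\,\hat{\mathbb{E}}\bigl[|\xi_j|^pI_{\{|\xi_j|>K\}}\bigr].$$
Since $\xi_j\in L_G^p(\Omega_{t_j})\subset L_G^p(\Omega_T)$, Theorem \ref{LG characterization} gives $\hat{\mathbb{E}}[|\xi_j|^pI_{\{|\xi_j|>K\}}]\to0$ as $K\to\infty$ for each $j$; as the sum is finite, the left-hand side tends to $0$, which settles the simple case.

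For a general $\eta\in\bar{M}_G^p(0,T)$, fix $\varepsilon>0$ and choose, by definition of the completion, a simple $\eta^n\in M_G^{p,0}(0,T)$ with $\hat{\mathbb{E}}[\int_0^T|\eta_t-\eta_t^n|^p\,d\langle M\rangle_t]<\varepsilon$. The bridging tool is the pointwise inequality, valid for all $x,y\in\mathbb{R}$ and $p\geq1$,
$$|x|^pI_{\{|x|>N\}}\leq C\bigl(|x-y|^p+|y|^pI_{\{|y|>N/2\}}\bigr),$$
with $C$ depending only on $p$; it is checked by separating $|y|>N/2$ (where $|x|^p\leq2^{p-1}(|x-y|^p+|y|^p)$) from $|y|\leq N/2$ (where $|x|>N$ forces $|x-y|>N/2$ and hence $|x|\leq2|x-y|$). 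Substituting $x=\eta_t$, $y=\eta_t^n$, integrating against $d\langle M\rangle_t$, and invoking sub-additivity and positive homogeneity of $\hat{\mathbb{E}}$ produces
$$\hat{\mathbb{E}}\Bigl[\int_0^T|\eta_t|^pI_{\{|\eta_t|>N\}}\,d\langle M\rangle_t\Bigr]\leq C\varepsilon+C\,\hat{\mathbb{E}}\Bigl[\int_0^T|\eta_t^n|^pI_{\{|\eta_t^n|>N/2\}}\,d\langle M\rangle_t\Bigr].$$
For this fixed $n$ the second term tends to $0$ as $N\to\infty$ by the simple case, so the left-hand side is at most $2C\varepsilon$ for all large $N$; letting $\varepsilon\downarrow0$ closes the argument.

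The step I expect to demand the most care is making the tail control of the approximants $\eta^n$ pass to the limit $\eta$: the map $x\mapsto|x|^pI_{\{|x|>N\}}$ is discontinuous, so one cannot simply invoke $\bar{M}_G^p$-continuity of the functional. The truncation inequality above is precisely what circumvents this, and splitting the threshold into $N$ versus $N/2$ is essential, since it trades the discontinuous cut-off of $\eta$ for the $\bar{M}_G^p$-distance $\|\eta-\eta^n\|_{\bar{M}_G^p}$ together with a cut-off of the explicit simple process $\eta^n$. A secondary point to verify is that $\int_0^T|\eta_t|^pI_{\{|\eta_t|>N\}}\,d\langle M\rangle_t$ is a genuine element of $\mathcal{L}(\Omega_T)$, which follows from its domination by $\int_0^T|\eta_t|^p\,d\langle M\rangle_t$, whose $\hat{\mathbb{E}}$ is the finite norm $\|\eta\|_{\bar{M}_G^p}^p$.
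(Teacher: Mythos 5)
Your proof is correct, and it both parallels and completes the paper's own argument. The skeleton is the same: reduce to a simple process $\eta=\sum_{j}\xi_{j}I_{[t_{j},t_{j+1})}$, condition at $t_{j}$ to replace the increment $\langle M\rangle_{t_{j+1}}-\langle M\rangle_{t_{j}}$ by $\Lambda(t_{j+1}-t_{j})$, and finish with Theorem \ref{LG characterization} applied to $\xi_{j}\in L_{G}^{p}(\Omega_{t_{j}})$. The difference is in how the discontinuous weight $|\xi_{j}|^{p}I_{\{|\xi_{j}|>K\}}$ is handled in the conditioning step: the paper stays inside the $G$-calculus by sandwiching the indicator between continuous cut-offs, $I_{\{|x|>N\}}\leq\varphi_{N}(x)\leq I_{\{|x|>N-1\}}$, so that $|\xi_{j}|^{p}\varphi_{N}(\xi_{j})\in L_{G}^{1}(\Omega_{t_{j}})$ and the identity $\hat{\mathbb{E}}[X\,Y]=\hat{\mathbb{E}}[X\,\hat{\mathbb{E}}_{t_{j}}[Y]]$ for nonnegative $\mathcal{F}_{t_{j}}$-measurable $X$ applies directly, while you pass to the representation $\hat{\mathbb{E}}=\sup_{P\in\mathcal{P}}E_{P}$ and condition classically under each $P$. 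Your route works, but be aware it invokes the fact that $E_{P}[\,\cdot\,|\mathcal{F}_{t}]\leq\hat{\mathbb{E}}_{t}[\,\cdot\,]$ $P$-a.s. for every $P\in\mathcal{P}$, so that (\ref{M control}) really does give $E_{P}[\langle M\rangle_{t_{j+1}}-\langle M\rangle_{t_{j}}|\mathcal{F}_{t_{j}}]\leq\Lambda(t_{j+1}-t_{j})$ $P$-a.s.; this domination is true and standard (it follows from the representation of the conditional $G$-expectation, see \cite{STZ}), but it is nowhere stated in the paper, so you should either cite it explicitly or adopt the paper's sandwich trick, which avoids it altogether. Conversely, your second half is a genuine improvement in completeness: the paper disposes of the passage from simple to general $\eta\in\bar{M}_{G}^{p}(0,T)$ with ``it suffices to prove the simple case,'' and, as you correctly observe, this reduction is not automatic because $\eta\mapsto\hat{\mathbb{E}}[\int_{0}^{T}|\eta_{t}|^{p}I_{\{|\eta_{t}|>N\}}d\langle M\rangle_{t}]$ is not continuous in the $\bar{M}_{G}^{p}$-norm; your two-level truncation inequality $|x|^{p}I_{\{|x|>N\}}\leq 2^{p}\bigl(|x-y|^{p}+|y|^{p}I_{\{|y|>N/2\}}\bigr)$ is exactly the argument needed to make that step rigorous, and your verification of it is correct. (You also correctly read the statement's ``$N\rightarrow 0$'' as a typo for $N\rightarrow\infty$.)
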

\begin{proof}
	It suffices to prove the case that $\eta_{t}=\sum_{j=0}^{n-1}\xi_{j}
	(\omega)I_{[t_{j},t_{j+1})}(t)\in M^{p,0}_G(0,T)$, where $\xi_j\in L^p_G(\Omega_{t_j})$. We take  bounded, continuous functions $\varphi_N$ such that $I_{\{|x|>N \}}\leq \varphi_N\leq I_{\{|x|>{N-1} \}}$.
	Then by Theorem \ref{LG characterization},
	\begin{align*}
	\hat{\mathbb{E}}%
	[\int_{0}^{T}|\eta_t|^pI_{\{|\eta_t|> N\}}d\langle M\rangle_{t}]&=\hat{\mathbb{E}}[\sum_{i=1}^{n-1}|\xi_j|^pI_{\{|\xi_j|>N \} }(\langle M\rangle_{t_{j+1}}-\langle M\rangle_{t_{j}%
	})]\\&\leq \sum_{i=1}^{n-1}\hat{\mathbb{E}}[|\xi_j|^p\varphi_N(\xi_j)(\langle M\rangle_{t_{j+1}}-\langle M\rangle_{t_{j}%
})]\\
&=\sum_{i=1}^{n-1}\hat{\mathbb{E}}[|\xi_j|^p\varphi_N(\xi_j)\hat{\mathbb{E}}_{t_j}[\langle M\rangle_{t_{j+1}}-\langle M\rangle_{t_{j}%
}]]\\
&\leq \Lambda\sum_{i=1}^{n-1}\hat{\mathbb{E}}[|\xi_j|^pI_{\{|\xi_j|>N-1 \} }]({t_{j+1}}-{t_{j}%
})\rightarrow 0,\ \text{as}\ N\rightarrow \infty.
	\end{align*}
	\end{proof}

Recall that we always assume that $(M_{t}%
)_{t\geq0}$ is a symmetric $G$-martingale satisfying (H). The following Krylov's estimate can be used to show  that  a kind of processes belong to  $\bar{M}%
_{G}^{p}(0,T)$.
\begin{theorem}
\label{Krylov}(Krylov's estimate)\cite{Krylov-2, Me, Situ} There exists
some constant $C$ depending on $\Lambda$ and $T$ such that, for each $p\geq1$
and Borel function $g$,
\[
\mathbb{\hat{E}}[  \int_{0}^{T}|g(M_{t})|d\langle M\rangle_{t}]
\leq C(  \int_{\mathbb{R}}|g(x)|^{p}dx)  ^{1/p}.
\]

\end{theorem}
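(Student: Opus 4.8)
The plan is to pass to the representing family $\mathcal{P}$ and reduce the statement to a uniform-in-$P$ classical estimate. Since $g$ is Borel and $\int_{0}^{T}|g(M_{t})|d\langle M\rangle_{t}$ is a nonnegative Borel functional, its $E_{P}$-expectation exists for every $P$, so by Theorem \ref{DHP representation} (and the definition of the extended $\hat{\mathbb{E}}$ on $\mathcal{L}(\Omega_{T})$) we have
\[
\hat{\mathbb{E}}\Big[\int_{0}^{T}|g(M_{t})|d\langle M\rangle_{t}\Big]=\sup_{P\in\mathcal{P}}E_{P}\Big[\int_{0}^{T}|g(M_{t})|d\langle M\rangle_{t}\Big].
\]
It therefore suffices to bound each $P$-expectation by $C\,\|g\|_{L^{p}(\mathbb{R})}$ with a constant $C=C(\Lambda,T)$ that does not depend on $P$, and then take the supremum.

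Under each $P\in\mathcal{P}$ the process $M$ is, by the remark following the definition of symmetric $G$-martingale, a classical continuous martingale whose classical quadratic variation coincides q.s. with $\langle M\rangle$. I can therefore use the one-dimensional classical occupation/local-time machinery under the fixed measure $P$. Writing $L_{T}^{a,P}$ for the classical (semimartingale) local time of $M$ at level $a$ up to time $T$, the occupation times formula gives $\int_{0}^{T}|g(M_{t})|d\langle M\rangle_{t}=\int_{\mathbb{R}}|g(a)|\,L_{T}^{a,P}\,da$, $P$-a.s. Taking $E_{P}$ and applying Tonelli (all terms nonnegative) yields $E_{P}[\int_{0}^{T}|g(M_{t})|d\langle M\rangle_{t}]=\int_{\mathbb{R}}|g(a)|\,\rho_{P}(a)\,da$, where $\rho_{P}(a):=E_{P}[L_{T}^{a,P}]$ is the expected occupation density.

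The heart of the matter is a uniform control of $\rho_{P}$. By the classical Tanaka formula, $L_{T}^{a,P}=|M_{T}-a|-|M_{0}-a|-\int_{0}^{T}\mathrm{sgn}(M_{t}-a)\,dM_{t}$; since the integrand is bounded and $M$ is a true $P$-martingale, the stochastic integral has zero $P$-expectation, whence by the reverse triangle inequality $\rho_{P}(a)=E_{P}[|M_{T}-a|-|M_{0}-a|]\leq E_{P}[|M_{T}-M_{0}|]\leq (E_{P}[|M_{T}-M_{0}|^{2}])^{1/2}\leq(\Lambda T)^{1/2}$, using hypothesis (H) via (\ref{M}) together with $E_{P}\leq\hat{\mathbb{E}}$. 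Thus $\|\rho_{P}\|_{L^{\infty}}\leq\sqrt{\Lambda T}$ uniformly in $P$. Likewise, the occupation formula with $g\equiv1$ gives $\int_{\mathbb{R}}\rho_{P}(a)\,da=E_{P}[\langle M\rangle_{T}]\leq\hat{\mathbb{E}}[\langle M\rangle_{T}]\leq\Lambda T$ by (\ref{M control}), so $\|\rho_{P}\|_{L^{1}}\leq\Lambda T$.

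Finally, for general $p\geq1$ I would apply Hölder's inequality with the conjugate exponent $p'$, so that $E_{P}[\int_{0}^{T}|g(M_{t})|d\langle M\rangle_{t}]\leq\|g\|_{L^{p}(\mathbb{R})}\,\|\rho_{P}\|_{L^{p'}(\mathbb{R})}$, and then interpolate $\|\rho_{P}\|_{L^{p'}}\leq\|\rho_{P}\|_{L^{\infty}}^{1/p}\|\rho_{P}\|_{L^{1}}^{1/p'}\leq(\sqrt{\Lambda T})^{1/p}(\Lambda T)^{1/p'}\leq\max(1,\sqrt{\Lambda T})\,\max(1,\Lambda T)=:C$, a quantity depending only on $\Lambda$ and $T$ and uniform in both $p$ and $P$. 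Taking the supremum over $P$ then closes the argument. The main obstacle is exactly the $P$-uniformity of the density bound in the third step: each $P$ is only an individual classical martingale and a priori yields its own constant, and it is the single hypothesis (H) — shared by every $P\in\mathcal{P}$ through the same $\Lambda$ — that keeps the supremum finite. As an alternative to the explicit Tanaka computation, one may instead invoke the classical one-dimensional Krylov estimate of \cite{Krylov-2,Me,Situ} under each fixed $P$, but one still has to check that its constant depends only on $\Lambda$ and $T$, which again reduces to the same uniform control supplied by (H).
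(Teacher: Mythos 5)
Your proposal is correct and follows essentially the same route as the paper's proof: reduce to each fixed $P\in\mathcal{P}$, bound the expected local time $E_{P}[L_{T}^{P}(a)]$ uniformly in $P$ and $a$ via the classical Tanaka formula together with hypothesis (H), combine with the classical occupation times formula and H\"older's inequality, and finally take the supremum over $\mathcal{P}$. The only organizational difference is that the paper applies H\"older first in the time variable (with respect to the measure $E_{P}[\,\cdot\;d\langle M\rangle_{t}]$) and then the occupation formula to $|g|^{p}$, whereas you apply the occupation formula to $|g|$ first and then H\"older in the space variable $a$ together with an $L^{1}$--$L^{\infty}$ interpolation of the density $\rho_{P}$ (the $L^{1}$ bound coming from $E_{P}[\langle M\rangle_{T}]\leq\Lambda T$) --- a dual reordering of the same computation that produces the identical constant.
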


\begin{proof}
We outline the proof for the convenience of readers. For any $P\in\mathcal{P}%
$, $(M_{t})_{t\geq0}$ is a martingale. By H\"{o}lder's inequality,
\begin{equation}
\label{234243}E_{P}[  \int_{0}^{T}|g(M_{t})|d\langle M\rangle_{t}]
\leq C_{1}(  E_{P}[  \int_{0}^{T}|g(M_{t})|^{p}d\langle M\rangle
_{t}]  )  ^{1/p},
\end{equation}
where $C_{1}=(\mathbb{\hat{E}}[\langle M\rangle_{T}])^{(p-1)/p}$. Let
$L_{t}^{P}(a)$ be the correponding local time at $a$ of $M$ under $P$. By
 the classical Tanaka formula (see, e.g., \cite{RW}),
\[
L_{T}^{P}(a)=|M_{T}-a|-|M_{0}-a|-\int_{0}^{T}\text{sgn}(M_{t}-a)dM_{t},\text{
}P\text{-a.s.} \label{new-nwww-2}%
\]
Taking expectation on both sides, we get%
\[
0\leq E_{P}[L_{T}^{P}(a)]=E_{P}[|M_{T}-a|]-|M_{0}-a|\leq E_{P}[|M_{T}%
-M_{0}|]\leq\mathbb{\hat{E}}[|M_{T}-M_{0}|].
\]
Applying the occupation time formula under $P$, we obtain
\begin{equation}
\label{97243}E_{P}[\int_{0}^{T}|g(M_{t})|^{p}d\langle M\rangle_{t}]=E_{P}%
[\int_{\mathbb{R}}|g(a)|^{p}L_{T}^{P}(a)da]=\int_{\mathbb{R}}|g(a)|^{p}%
E_{P}[L_{T}^{P}(a)]da\leq C_2\int_{\mathbb{R}}|g(a)|^{p}da,
\end{equation}
where $C_{2}=\mathbb{\hat{E}}[|M_{T}-M_{0}|]$. Combining (\ref{234243}) and
(\ref{97243}), we have
\[
E_{P}[  \int_{0}^{T}|g(M_{t})|d\langle M\rangle_{t}]  \leq
C(\int_{\mathbb{R}}|g(a)|^{p}da)^{\frac1p}.
\]
Note that $C$ is independent of $P$, the desired result follows by taking
supremum over $P\in\mathcal{P}$ in the above inequality.
\end{proof}

\begin{lemma}
\label{a.e. identity} Assume $\varphi^{\prime},\varphi$ are Borel measurable
and$\ \varphi^{\prime}=\varphi\ a.e.$ Then for each $p\geq1$, we have
$\varphi^{\prime}(M_{\cdot})=\varphi(M_{\cdot})$ in $\bar{M}_{G}^{p}(0,T)$,
i.e., $\left\vert \left\vert \varphi^{\prime}(M_{\cdot})-\varphi(M_{\cdot
})\right\vert \right\vert _{\bar{M}_{G}^{p}}=0$.
\end{lemma}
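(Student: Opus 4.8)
The plan is to apply Krylov's estimate (Theorem \ref{Krylov}) to the single Borel function $g(x) := |\varphi'(x)-\varphi(x)|^{p}$, which is precisely the device that converts a Lebesgue-a.e.\ identity on $\mathbb{R}$ into the vanishing of the $\bar{M}_{G}^{p}$-seminorm. First I would observe that $g$ is a nonnegative Borel measurable function on $\mathbb{R}$, so that $g(M_{t})=|\varphi'(M_{t})-\varphi(M_{t})|^{p}$ pointwise, and hence, directly from the definition of the norm $\left\vert\left\vert\cdot\right\vert\right\vert_{\bar{M}_{G}^{p}}$,
\[
\left\vert\left\vert \varphi'(M_{\cdot})-\varphi(M_{\cdot})\right\vert\right\vert_{\bar{M}_{G}^{p}}^{p}=\hat{\mathbb{E}}\Big[\int_{0}^{T}g(M_{t})\,d\langle M\rangle_{t}\Big].
\]

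Next I would invoke Theorem \ref{Krylov} with its free exponent taken equal to $1$, applied to $g$, which gives
\[
\hat{\mathbb{E}}\Big[\int_{0}^{T}g(M_{t})\,d\langle M\rangle_{t}\Big]\leq C\int_{\mathbb{R}}|g(x)|\,dx=C\int_{\mathbb{R}}|\varphi'(x)-\varphi(x)|^{p}\,dx.
\]
Since $\varphi'=\varphi$ a.e.\ with respect to Lebesgue measure, the integrand $|\varphi'(x)-\varphi(x)|^{p}$ is zero for a.e.\ $x$, so the integral on the right-hand side vanishes. Combining the two displays yields $\left\vert\left\vert \varphi'(M_{\cdot})-\varphi(M_{\cdot})\right\vert\right\vert_{\bar{M}_{G}^{p}}=0$, which is the assertion.

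There is no genuinely hard step in this argument; its entire content is the recognition that Krylov's estimate dominates the occupation-type quantity $\hat{\mathbb{E}}[\int_{0}^{T}g(M_{t})\,d\langle M\rangle_{t}]$ by a Lebesgue integral of $g$ over the state space $\mathbb{R}$, against which the a.e.\ hypothesis bites directly. The only points demanding a little care are checking that $g=|\varphi'-\varphi|^{p}$ is admissible in Theorem \ref{Krylov} (it is Borel and nonnegative, which is all that the statement requires), and that the left-hand side of Krylov's inequality coincides exactly with the $p$-th power of the seminorm being estimated. I would also note that the choice of exponent in Krylov's estimate is immaterial here: for any $q\geq1$ the resulting bound $C(\int_{\mathbb{R}}|\varphi'(x)-\varphi(x)|^{pq}\,dx)^{1/q}$ still vanishes, since the integrand is zero a.e., so the conclusion is robust to that choice.
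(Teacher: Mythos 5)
Your proposal is correct and coincides with the paper's own argument: the paper also applies Theorem \ref{Krylov} (with its free exponent taken to be $1$) to the Borel function $|\varphi'-\varphi|^{p}$, obtaining $\hat{\mathbb{E}}[\int_{0}^{T}|\varphi'-\varphi|^{p}(M_{t})\,d\langle M\rangle_{t}]\leq C\Vert\varphi'-\varphi\Vert_{L^{p}(\mathbb{R})}^{p}=0$. The only difference is that you spell out the identification of the left-hand side with the $p$-th power of the $\bar{M}_{G}^{p}$-seminorm, which the paper leaves implicit.
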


\begin{proof}
By Theorem \ref{Krylov}, we get $\mathbb{\hat{E}}[\int_{0}^{T}|\varphi
^{\prime}-\varphi|^{p}(M_{t})d\langle M\rangle_{t}]\leq C\Vert\varphi^{\prime
}-\varphi\Vert^p_{L^{p}(\mathbb{R})}=0.$
\end{proof}

The following is a kind of dominated convergence result for the $G$-martingales.
\begin{proposition}
\label{myq13} Assume $(\varphi^{n})_{n\geq1}$ is a sequence of Borel
measurable functions such that $\varphi^{n}$ is linear growth uniformly, i.e.,
$|\varphi^{n}(x)|\leq C(1+|x|)$, $n\geq1$ for some constants $C$. If
$\varphi^{n}\rightarrow\varphi$ a.e., then
\[
\lim\limits_{n\rightarrow\infty}\mathbb{\hat{E}}[\int_{0}^{T}|\varphi
^{n}(M_{t})-\varphi(M_{t})|^{2}d\langle M\rangle_{t}]=0.
\]

\end{proposition}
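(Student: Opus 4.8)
The plan is to localise in the spatial variable at a level $R$: I treat the bounded region $\{|M_t|\le R\}$ with Krylov's estimate combined with a classical dominated convergence in the Lebesgue integral, and I treat the tail $\{|M_t|>R\}$ through moment bounds that the hypothesis (H) secretly provides. Throughout write $\psi_n:=|\varphi^n-\varphi|^2$; since $\varphi$ inherits the uniform linear growth as an a.e.\ limit, there is a constant (still denoted $C$) with $\psi_n(x)\le C(1+|x|^2)$ for all $n$ and all $x$, while $\psi_n\to0$ a.e.

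Fixing $R>0$, I would split
\[
\hat{\mathbb{E}}\Big[\int_0^T\psi_n(M_t)\,d\langle M\rangle_t\Big]\le \hat{\mathbb{E}}\Big[\int_0^T\psi_n(M_t)I_{\{|M_t|\le R\}}\,d\langle M\rangle_t\Big]+\hat{\mathbb{E}}\Big[\int_0^T\psi_n(M_t)I_{\{|M_t|>R\}}\,d\langle M\rangle_t\Big].
\]
For the first term I apply Theorem \ref{Krylov} with $p=1$ to $g=\psi_nI_{[-R,R]}$, bounding it by $C\int_{-R}^{R}\psi_n(x)\,dx$. On $[-R,R]$ the integrands are dominated by the constant $C(1+R^2)$ and converge to $0$ a.e., so the classical dominated convergence theorem gives $\int_{-R}^{R}\psi_n\,dx\to0$ as $n\to\infty$, for each fixed $R$.

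The crux is the tail term, for which I must produce a bound that is uniform in $n$ and vanishes as $R\to\infty$. Using $\psi_n\le C(1+|x|^2)$ it suffices to estimate $\hat{\mathbb{E}}[\int_0^T(1+M_t^2)I_{\{|M_t|>R\}}\,d\langle M\rangle_t]$, which I dominate pathwise by $\big(1+\sup_{0\le s\le T}M_s^2\big)I_{\{\sup_{0\le s\le T}|M_s|>R\}}\langle M\rangle_T$ and then split by the Cauchy--Schwarz inequality for $\hat{\mathbb{E}}$ into
\[
\hat{\mathbb{E}}\big[(1+\sup_s M_s^2)^2 I_{\{\sup_s|M_s|>R\}}\big]^{1/2}\;\hat{\mathbb{E}}\big[\langle M\rangle_T^2\big]^{1/2}.
\]
Here (H) is used to upgrade the second-order control to a fourth-order one: from the finite-variation identity $\langle M\rangle_T^2=2\int_0^T(\langle M\rangle_T-\langle M\rangle_t)\,d\langle M\rangle_t$ together with the conditional estimate (\ref{M control}) (applied under each $P\in\mathcal{P}$ in the form $E_P[\langle M\rangle_T-\langle M\rangle_t\mid\mathcal{F}_t]\le\hat{\mathbb{E}}_t[\langle M\rangle_T-\langle M\rangle_t]\le\Lambda(T-t)$), one gets $\hat{\mathbb{E}}[\langle M\rangle_T^2]\le 2\Lambda^2T^2<\infty$, and the B-D-G inequality then yields $\hat{\mathbb{E}}[\sup_s|M_s|^4]<\infty$. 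Consequently $Z:=(1+\sup_s M_s^2)^2\in L_G^1(\Omega_T)$, and since $\{\sup_s|M_s|>R\}\subseteq\{Z\ge(1+R^2)^2\}$, the uniform-integrability characterisation of $L_G^1$ in Theorem \ref{LG characterization} forces $\hat{\mathbb{E}}[Z\,I_{\{\sup_s|M_s|>R\}}]\to0$ as $R\to\infty$; this bound being independent of $n$, the tail is controlled uniformly.

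Combining the two parts, $\limsup_n\hat{\mathbb{E}}[\int_0^T\psi_n\,d\langle M\rangle_t]$ is at most the $n$-independent tail quantity at level $R$, which tends to $0$ as $R\to\infty$; letting $R\to\infty$ concludes. The principal obstacle is exactly this tail estimate: the naive linear-growth bound makes the relevant integral scale like a fourth moment of $M$, so the argument only closes because (H), through the quadratic-variation identity and B-D-G, secretly guarantees $\langle M\rangle_T\in L_G^2(\Omega_T)$ and $\sup_s|M_s|\in L_G^4(\Omega_T)$.
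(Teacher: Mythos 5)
Your reduction to $\varphi$ of linear growth, your splitting at the level $R$, and your treatment of the bounded region (Krylov's estimate, Theorem \ref{Krylov}, followed by classical dominated convergence on $[-R,R]$) coincide with the paper's proof. The deviation, and the problem, is in the tail term. The step ``$\hat{\mathbb{E}}[\sup_s|M_s|^4]<\infty$, consequently $Z:=(1+\sup_s M_s^2)^2\in L_G^1(\Omega_T)$'' is a non sequitur in the $G$-framework: finiteness of the sublinear expectation $\hat{\mathbb{E}}[Z]=\sup_{P\in\mathcal{P}}E_P[Z]$ does \emph{not} imply membership in $L_G^1(\Omega_T)$. By Theorem \ref{LG characterization}, membership requires (a) a quasi-continuous version of $Z$, which you never address, and (b) precisely the uniform-integrability property $\lim_{N}\hat{\mathbb{E}}[Z I_{\{Z\geq N\}}]=0$ that you are trying to extract from it --- so the appeal to Theorem \ref{LG characterization} is circular. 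And the implication genuinely fails, not just formally: since $\mathcal{P}$ is a non-dominated family, one can have $\sup_P E_P[Z]<\infty$ while mass escapes to infinity along a sequence of measures, so that $\hat{\mathbb{E}}[Z I_{\{Z\geq N\}}]$ stays bounded away from $0$. A fourth-moment bound alone cannot close your tail estimate.

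The gap is repairable, but only by strengthening what you extract from (H). Your Garsia-type argument (the identity $\langle M\rangle_T^2=2\int_0^T(\langle M\rangle_T-\langle M\rangle_t)\,d\langle M\rangle_t$ plus the conditional bound) iterates to give $E_P[\langle M\rangle_T^p]\leq p!\,\Lambda^pT^p$ for every integer $p$, uniformly in $P$, hence $\hat{\mathbb{E}}[\sup_s|M_s|^{2p}]<\infty$ for all $p$ by B--D--G; then Markov/H\"{o}lder gives directly
\[
\hat{\mathbb{E}}\bigl[Z\,I_{\{\sup_s|M_s|>R\}}\bigr]\leq \frac{1}{R^{2}}\,\hat{\mathbb{E}}\bigl[Z\,\sup_s M_s^{2}\bigr]\leq \frac{C}{R^{2}}\bigl(1+\hat{\mathbb{E}}[\sup_s|M_s|^{6}]\bigr)\rightarrow 0,
\]
with no reference to $L_G^1$ at all. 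Note that the paper avoids this entire moment machinery: it observes $(M_t)_{t\leq T}\in M_G^2(0,T)\subset\bar{M}_G^2(0,T)$ and invokes Proposition \ref{lemma-2}, whose proof handles the tail on simple processes using only the conditional estimate (\ref{M control}) and the tower property --- a cheaper and more intrinsic route. (A smaller caveat: your use of $E_P[\,\cdot\,|\mathcal{F}_t]\leq\hat{\mathbb{E}}_t[\,\cdot\,]$ $P$-a.s.\ is a true but nontrivial fact about the representation of conditional $G$-expectation, nowhere stated in the paper; the paper's Proposition \ref{lemma-2} sidesteps it as well.)
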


\begin{proof}
By Lemma \ref{a.e. identity}, without loss of generality, we may assume
$|\varphi(x)|\leq C(1+|x|)$. For any $N>0$, we have
\begin{equation}%
\begin{split}
\hat{\mathbb{E}}[\int_{0}^{T}|\varphi^{n}(M_{t})-\varphi(M_{t})|^{2}d\langle
M\rangle_{t}]\leq &  \hat{\mathbb{E}}[\int_{0}^{T}|\varphi^{n}(M_{t}%
)-\varphi(M_{t})|^{2}I_{\{|M_{t}|\leq N\}}d\langle M\rangle_{t}]\\
&  +\hat{\mathbb{E}}[\int_{0}^{T}|\varphi^{n}(M_{t})-\varphi(M_{t}%
)|^{2}I_{\{|M_{t}|>N\}}d\langle M\rangle_{t}].
\end{split}
\label{987758}%
\end{equation}
According to Theorem \ref{Krylov}, we can find a constant $C^{\prime}$ such
that
\[
\hat{\mathbb{E}}[\int_{0}^{T}|\varphi^{n}(M_{t})-\varphi(M_{t})|^{p}%
I_{\{|M_{t}|\leq N\}}d\langle M\rangle_{t}]\leq C^{\prime}\int_{\{|x|\leq
N\}}|\varphi^{n}(x)-\varphi(x)|^{p}dx,
\]
which converges to $0$, as $n\rightarrow\infty$ by the Lesbesgue's dominated
convergence theorem. On the other hand, 
note that $M_{t}=\mathbb{\hat{E}}_{t}[M_{T}]$, by an approximation argument, we have $(M_{t})_{t\leq T}\in{M}_{G}^{2}(0,T)\subset\bar{M}_{G}^{2}(0,T).$ 
Then, by the linear growth condition on $\varphi^{n}$ and $\varphi,$ and  Proposition \ref{lemma-2},
\begin{align*}
\hat{\mathbb{E}}[\int_{0}^{T}|\varphi^{n}(M_{t})-\varphi(M_{t})|^{2}%
I_{\{|M_{t}|> N\}}d\langle M\rangle_{t}]  &  \leq C\hat{\mathbb{E}}%
[\int_{0}^{T}(1+|M_{t}|)^{2}I_{\{|M_{t}|> N\}}d\langle M\rangle_{t}]\rightarrow0,\ \text{as}\ N\rightarrow0.
\end{align*}
First letting $n\rightarrow\infty$ and then letting $N\rightarrow\infty$ in
(\ref{987758}), we get the desired result.
\end{proof}

By Krylov's estimates and Proposition \ref{myq13}, we can show that $\bar{M}_{G}^{2}(0,T)$ contains a lot of processes that we may interest. Such kind of processes are important for the construction of local time.

\begin{proposition}
\label{Mtilde} For
each Borel measurable function $\varphi$ of linear growth, we have
$(\varphi(M_{t}))_{t\leq T}\in\bar{M}_{G}^{2}(0,T).$
\end{proposition}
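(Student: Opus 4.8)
The goal is to show that for any Borel $\varphi$ of linear growth, the process $(\varphi(M_t))_{t\le T}$ lies in $\bar M_G^2(0,T)$. Since $\bar M_G^2(0,T)$ is by definition the completion of the step processes $M_G^{2,0}(0,T)$ under $\|\cdot\|_{\bar M_G^2}$, it suffices to produce a sequence in $M_G^{2,0}(0,T)$ converging to $\varphi(M_\cdot)$ in that norm. My plan is a two-stage approximation: first approximate the Borel function $\varphi$ by nice (bounded Lipschitz, or at least bounded continuous) functions $\varphi^n$, reducing to the case of a regular integrand; then approximate the process $\varphi^n(M_\cdot)$ itself by step processes in time.

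**Main steps.**

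Let me write the plan in order. First I would choose a truncation/smoothing sequence $\varphi^n$ with $\varphi^n \in C_{b.Lip}(\mathbb R)$, linear growth uniform in $n$ (e.g. truncate $\varphi$ to $|x|\le n$ and mollify), so that $\varphi^n \to \varphi$ a.e. By Proposition \ref{myq13}, the linear-growth hypothesis gives
\[
\lim_{n\to\infty}\hat{\mathbb E}\Big[\int_0^T |\varphi^n(M_t)-\varphi(M_t)|^2 d\langle M\rangle_t\Big]=0,
\]
that is, $\varphi^n(M_\cdot)\to\varphi(M_\cdot)$ in $\|\cdot\|_{\bar M_G^2}$. So it is enough to prove $\varphi^n(M_\cdot)\in\bar M_G^2(0,T)$ for each fixed $n$, i.e. to handle a \emph{bounded Lipschitz} integrand $\psi:=\varphi^n$. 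Second, for such $\psi$, since $M$ is continuous q.s. and $\psi$ is Lipschitz, $\psi(M_\cdot)$ is a bounded continuous (in $t$, q.s.) process adapted in the sense that $\psi(M_t)\in L_G^2(\Omega_t)$ (using Theorem \ref{LG characterization}, as in Lemma \ref{lemma-1}). I would then form the time-discretizations $\eta^k_t=\sum_{j}\psi(M_{t^k_j})I_{[t^k_j,t^k_{j+1})}(t)$ along partitions $\pi^k$ with mesh $\to 0$; each $\eta^k\in M_G^{2,0}(0,T)$ because $\psi(M_{t^k_j})\in L_G^2(\Omega_{t^k_j})$. Third, I would estimate
\[
\|\psi(M_\cdot)-\eta^k\|_{\bar M_G^2}^2 = \hat{\mathbb E}\Big[\int_0^T |\psi(M_t)-\psi(M_{t^k_{j(t)}})|^2\,d\langle M\rangle_t\Big] \le L_\psi^2\,\hat{\mathbb E}\Big[\int_0^T |M_t-M_{t^k_{j(t)}}|^2\,d\langle M\rangle_t\Big]
\]
and argue this tends to $0$ as the mesh shrinks, using q.s. (hence $P$-wise, under each $P\in\mathcal P$) uniform continuity of $t\mapsto M_t$ together with a dominated-convergence control of the $G$-expectation; the bound (\ref{M control}) on $\langle M\rangle$ supplies the integrability needed to dominate. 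This shows $\psi(M_\cdot)\in\bar M_G^2(0,T)$, and combined with the first step completes the proof.

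**Main obstacle.** The delicate point is the third step: justifying that $\hat{\mathbb E}\big[\int_0^T |M_t-M_{t^k_{j(t)}}|^2 d\langle M\rangle_t\big]\to 0$. The integrand tends to $0$ q.s. by path continuity, but passing the limit through the \emph{sublinear} expectation and the random measure $d\langle M\rangle_t$ requires care — one cannot naively invoke classical dominated convergence. The clean route is to pass to the representing family $\mathcal P$ (Theorem \ref{DHP representation}): under each fixed $P$, $M$ is a continuous $L^2$-martingale, $d\langle M\rangle_t$ is a genuine finite measure, and classical dominated convergence (dominated by $C(1+\sup_t|M_t|^2)$ together with $\hat{\mathbb E}[\langle M\rangle_T]\le\Lambda T$) gives the convergence with a bound uniform in $P$; taking the supremum over $P$ then controls $\hat{\mathbb E}$. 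Alternatively, one may avoid this entirely by invoking the already-established fact, noted in the proof of Proposition \ref{myq13}, that $(M_t)_{t\le T}\in M_G^2(0,T)\subset\bar M_G^2(0,T)$: apply Lemma \ref{lemma-1} directly with the bounded Lipschitz $\psi$ — but since Lemma \ref{lemma-1} is stated for $X\in\bar M_G^p(0,T)$, one first records $M_\cdot\in\bar M_G^2(0,T)$ and then concludes $\psi(M_\cdot)=\varphi^n(M_\cdot)\in\bar M_G^2(0,T)$ immediately, bypassing the explicit mesh estimate. This second approach is shorter and I expect it to be the intended one; the whole proposition then reduces to Lemma \ref{lemma-1} plus Proposition \ref{myq13}.
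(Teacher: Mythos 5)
Your preferred route (the second one) is exactly the paper's proof: take bounded Lipschitz $\varphi^n\to\varphi$ a.e.\ with uniform linear growth, note $(M_t)_{t\le T}\in M_G^2(0,T)\subset\bar{M}_G^2(0,T)$ so that Lemma \ref{lemma-1} yields $\varphi^n(M_\cdot)\in\bar{M}_G^2(0,T)$, and conclude via the convergence from Proposition \ref{myq13} and completeness of $\bar{M}_G^2(0,T)$. Your suspicion about the first route was well founded: dominated convergence under each fixed $P$ only gives $P$-wise convergence to zero, and passing to $\sup_{P\in\mathcal{P}}$ requires a uniformity in $P$ that the sketch does not actually supply, which is precisely why bypassing the explicit mesh estimate is the right call.
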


\begin{proof}
We take a
sequence of bounded, Lipschitz continuous functions $(\varphi^{n})_{n\geq1}$,
such that $\varphi^{n}$ converges to $\varphi$ a.e. and $|\varphi^{n}(x)|\leq
C(1+|x|)$. Then by Theorem \ref{myq13}, we have
\[
\lim\limits_{n\rightarrow\infty}\mathbb{\hat{E}}[\int_{0}^{T}|\varphi
^{n}-\varphi|^{2}(M_{t})d\langle M\rangle_{t}]=0.
\]
Since $(\varphi^{n}(M_{t}))_{t\leq T}\in\bar{M}_{G}^{2}(0,T)$ for each $n$ by
Lemma \ref{lemma-1}, we derive that $(\varphi(M_{t}))_{t\leq T}\in\bar{M}%
_{G}^{2}(0,T)$, and this completes the proof.
\end{proof}

Now we can define the local time of $G$-martingale $M$. For each $P\in\mathcal{P}%
$, by the classical Tanaka formula under $P$,
\begin{equation}
|M_{t}-a|=|M_{0}-a|+\int_{0}^{t}\text{sgn}(M_{s}-a)dM_{s}+L_{t}^{P}(a),\text{
}P\text{-a.s.}, \label{new-nwww-4}%
\end{equation}
where $L_{t}^{P}(a)$ is the local time of martingale $M_{t}$ at $a$ under $P$. According to Proposition
\ref{Mtilde}, we have $($sgn$(M_{s}-a))_{s\leq t}\in\bar{M}_{G}^{2}(0,t)$.
This implies that $\int_{0}^{t}$sgn$(M_{s}-a)dM_{s}\in L_{G}^{2}(\Omega_{t})$. We define the local time for $G$-martingale $M$ by
\[
L_{t}(a):=|M_{t}-a|-|M_{0}-a|-\int_{0}^{t}\text{sgn}(M_{s}-a)dM_{s}\in
L_{G}^{2}(\Omega_{t}).
\]
Then (\ref{new-nwww-4}) gives that
\begin{equation}
L_{t}(a)=L_{t}^{P}(a),\ P\text{-a.s.} \label{897834}%
\end{equation}

The local time always possesses a bicontinuous modification.
\begin{theorem}
There exists a modification of the process $\{L_{t}(a):t\in\lbrack
0,T],a\in\mathbb{R}\}$ such that $L_{t}(a)$ is bicontinuous, i.e., the map
$(a,t)\rightarrow L_{t}(a)$ is continuous.
\end{theorem}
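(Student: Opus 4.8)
The plan is to realise $L$ as a process indexed by the single spatial parameter $a$ but taking values in the Banach space $C([0,T])$, and then to apply the Banach-valued Kolmogorov criterion of Lemma~\ref{modification lemma} in the variable $a$. The gain from this reformulation is that continuity in $t$ is essentially free: for each fixed $a$ the defining relation
$$L_t(a)=|M_t-a|-|M_0-a|-\int_0^t\text{sgn}(M_s-a)\,dM_s$$
exhibits $t\mapsto L_t(a)$ as the sum of the continuous path $t\mapsto|M_t-a|$ and the continuous modification of the stochastic integral constructed earlier, so $X_a:=(L_t(a))_{t\in[0,T]}$ is q.s. an element of $C([0,T])$. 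It therefore suffices to prove a spatial moment estimate of the form
$$\hat{\mathbb{E}}\Big[\sup_{0\le t\le T}|L_t(a)-L_t(b)|^{2m}\Big]\le C|a-b|^{m}\qquad(|a-b|\le 1)$$
for some integer $m\ge 2$ and a constant $C=C(m,\Lambda,T)$; Lemma~\ref{modification lemma}, applied with $\alpha=2m$ and $\beta=m-1$, then produces a modification continuous in $a$ uniformly in $t$, which combined with the $t$-continuity of each $X_a$ yields joint continuity, after patching the estimate over the compact intervals $[-R,R]$.

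To establish the spatial estimate I would work under a fixed $P\in\mathcal{P}$ and take the supremum over $P$ only at the end, using that $\hat{\mathbb{E}}[\,\cdot\,]=\sup_{P}E_P[\,\cdot\,]$ on nonnegative random variables together with (\ref{897834}), which lets me replace $L_\cdot(a)$ by the classical local time $L^P_\cdot(a)$ $P$-a.s. Subtracting the Tanaka formulas at $a$ and $b$, the difference of the absolute-value terms is bounded pathwise by $2|a-b|$, so the whole problem reduces to the martingale part. By the classical Burkholder--Davis--Gundy inequality under $P$ and the identity $|\text{sgn}(x-a)-\text{sgn}(x-b)|^2=4\,I_{\{a<x<b\}}$ for $a<b$,
$$E_P\Big[\sup_{0\le t\le T}\Big|\int_0^t(\text{sgn}(M_s-a)-\text{sgn}(M_s-b))\,dM_s\Big|^{2m}\Big]\le C\,E_P\Big[\Big(\int_0^T I_{\{a<M_s<b\}}\,d\langle M\rangle_s\Big)^{m}\Big],$$
so everything comes down to a moment bound for the occupation integral that is uniform in $P$.

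The main obstacle is precisely this occupation-integral estimate, and I would handle it through the classical occupation-time formula together with a uniform bound on the moments of the classical local times. Under $P$ one has $\int_0^T I_{\{a<M_s<b\}}\,d\langle M\rangle_s=\int_a^b L_T^P(x)\,dx$, so by Jensen's inequality
$$\Big(\int_a^b L_T^P(x)\,dx\Big)^{m}\le (b-a)^{m-1}\int_a^b (L_T^P(x))^{m}\,dx,$$
and taking $E_P$ reduces the task to showing $\sup_{x}\sup_{P}E_P[(L_T^P(x))^{m}]<\infty$. This last bound follows from the classical Tanaka formula and BDG, which give $E_P[(L_T^P(x))^{m}]\le C_m\,E_P[\langle M\rangle_T^{m/2}]$ uniformly in $x$, while the right-hand side is controlled uniformly in $P$ by iterating (\ref{M control}) to obtain $\hat{\mathbb{E}}[\langle M\rangle_T^{m/2}]<\infty$. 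Collecting these bounds gives $\hat{\mathbb{E}}[(\int_0^T I_{\{a<M_s<b\}}\,d\langle M\rangle_s)^{m}]\le C|a-b|^{m}$, hence the spatial estimate, and the proof concludes by Lemma~\ref{modification lemma}. The points requiring care are the uniformity in $P$ of every constant, so that the final $\sup_P$ is harmless, and the systematic use of (\ref{897834}) to transfer the classical occupation-time and Tanaka formulas to the $G$-local time.
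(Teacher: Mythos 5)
Your proposal is correct and follows the same skeleton as the paper's proof: reduce the problem to the stochastic-integral term, work under each fixed $P\in\mathcal{P}$ via (\ref{897834}), combine the classical B-D-G inequality, the occupation-time formula and Jensen's inequality into a spatial H\"older estimate that is uniform in $P$, and then apply the Banach-valued Kolmogorov criterion (Lemma \ref{modification lemma}) to $a\mapsto L_{\cdot}(a)\in C([0,T];\mathbb{R})$. The one genuine difference lies in how the uniform moment bound on the classical local times is obtained. The paper localizes: it stops $M$ at $\tau_{N}=\inf\{s:|M_{s}-M_{0}|^{n}+\langle M\rangle_{s}^{n/2}\geq N\}$, so that B-D-G gives $E_{P}[|\overline{L}_{T}^{P}(a)|^{n}]\leq C_{n}N$ with no integrability input beyond (H), proves bicontinuity of the stopped integrals $\hat{M}^{a}_{\cdot\wedge\tau_{N}}$ for each $N$, and then removes the localization. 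You instead prove the global bound $\sup_{x}\sup_{P}E_{P}[(L_{T}^{P}(x))^{m}]\leq C_{m}\sup_{P}E_{P}[\langle M\rangle_{T}^{m/2}]<\infty$, which spares you the patching over $N$ (the Kolmogorov lemma is invoked once, not once per $N$), but makes the finiteness of $\hat{\mathbb{E}}[\langle M\rangle_{T}^{m/2}]$ the crux of the argument. That is the one step you treat too casually: ``iterating (\ref{M control})'' is not a one-line computation, because (\ref{M control}) is a bound on conditional $G$-expectations, not on conditional expectations under a fixed $P$. It can be made rigorous in two standard ways: either pass to each $P$ via the inequality $E_{P}[X|\mathcal{F}_{t}]\leq\hat{\mathbb{E}}_{t}[X]$ $P$-a.s.\ and apply the classical lemma that an increasing process $A$ with $E_{P}[A_{T}-A_{t}|\mathcal{F}_{t}]\leq K$ satisfies $E_{P}[A_{T}^{k}]\leq k!K^{k}$; or discretize $\langle M\rangle_{T}^{k}$, expand the product of increments, and use sub-additivity together with the tower property of $\hat{\mathbb{E}}_{t}$. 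Either route requires an auxiliary lemma that the paper's stopping-time device is designed precisely to avoid; once you supply it, your proof is complete, and arguably cleaner in its final assembly.
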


\begin{proof}
It suffices to prove that $\hat{M}_{t}^{a}:=\int_{0}^{t}\text{sgn}%
(M_{s}-a)dM_{s}$ has such kind of modification. Let any $N>0$ be given. For each integer
$n\geq1$, we define stopping time
\[
\tau_{N}=\inf\{s:|M_{s}-M_{0}|^{n}+\langle M\rangle_{s}^{\frac{n}{2}}\geq N\}.
\]
Denote $\bar{M}_{t}=M_{\tau_{N}\wedge t}$. Under each $P\in\mathcal{P}$, $\bar{M}_{t}$
is a martingale. We  denote the correponding local time of $\bar{M}$ by
$\overline{L}_{t}^{P}(a).$ Then by classical B-D-G inequality,
\[
E_{P}[|\overline{L}_{T}^{P}(a)|^{n}]\leq C_{n}(|\bar{M}_{T}-\bar{M}_{0}%
|^{n}+|\langle\bar{M}\rangle_{T}^{P}|^{\frac{n}{2}})\leq C_{n}N,
\]
where $C_n$ is a constant depending on $n$ and may vary from line to line.
For $x<y,$ from occupation formula under $P$ and H\"{o}lder's inequality, we
have
\begin{align*}
&  E_{P}[\sup_{0\leq t\leq T}|\int_{0}^{t}\text{sgn}(\bar{M}_{u}-x)d\bar
{M}_{u}-\int_{0}^{t}\text{sgn}(\bar{M}_{u}-y)d\bar{M}_{u}|^{2n}]\\
&  \leq C_{n}E_{P}[|\int_{0}^{T}I_{[x,y)}(\bar{M}_{t})d\langle\bar{M}%
\rangle_{t}^{P}|^{n}]\\
&  \leq C_{n}E_{P}[|\int_{x}^{y}\overline{L}_{t}^{P}(a)da|^{n}]\\
&  \leq C_{n}(y-x)^{n}E_{P}[\frac{1}{y-x}\int_{x}^{y}|\overline{L}_{T}%
^{P}(a)|^{n}da]\\
&  \leq C_{n}N(y-x)^{n}.
\end{align*}
Note that, $\hat{M}_{t\wedge\tau_N}^{a}=\int_{0}^{t}$sgn$(\bar{M}_{u}-a)d\bar
{M}_{u},$ $P$-a.s. Thus,%
\[
\hat{\mathbb{E}}[\sup_{0\leq t\leq T}|\hat{M}_{t\wedge\tau_N}^{x}-\hat{M}_{t\wedge\tau_N}^{y}
|^{2n}]\leq C_{n}N(y-x)^{n}.
\]
Applying Lemma \ref{modification lemma} to $$ a\rightarrow \hat{M}_{\cdot\wedge \tau_N}^{a}\in E:=C([0,T];\mathbb{R}),$$ we obtain that   
 $\hat{M}_{t\wedge\tau_N}^{a}$ has a bicontinuous version for each $N$, which implies that
$\hat{M}_{t}^{a}$ has a bicontinuous version.
\end{proof}

Now we give the Tanaka formula for convex functions of $G$-martingales.
\begin{theorem}
Let $f$ be a convex function such that left derivative $f_{-}^{\prime}$
satisfies the linear growth condition. Then%
\begin{equation}
\label{56783}f(M_{t})-f(M_{0})=\int_{0}^{t}f_{-}^{\prime}(M_{s})dM_{s}+\frac{1}{2}%
\int_{\mathbb{R}}L_{t}(a)df_{-}^{\prime}(a),\ \ \ \ \text{q.s.}%
\end{equation}
where $df_{-}^{\prime}$ is the Lebesgue-Stieltjes measure of $f_{-}^{\prime}$. Moreover, the integral $\int_{\mathbb{R}}L_{t}(a)df_{-}^{\prime}(a) \in L_{G}^{1}(\Omega_{t}).$
\end{theorem}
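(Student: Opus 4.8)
The plan is to \emph{aggregate} the classical Tanaka--Meyer formula over the representing family $\mathcal{P}$. By the Remark following the definition of symmetric $G$-martingales, under every $P\in\mathcal{P}$ the process $M$ is a continuous $P$-martingale, so the classical Tanaka--Meyer formula (see \cite{RW}) applies: for our convex $f$ with left derivative $f_{-}^{\prime}$,
\[
f(M_{t})-f(M_{0})=\int_{0}^{t}f_{-}^{\prime}(M_{s})\,dM_{s}^{P}+\frac12\int_{\mathbb{R}}L_{t}^{P}(a)\,df_{-}^{\prime}(a),\qquad P\text{-a.s.}
\]
I would then show that each of the two terms on the right agrees $P$-a.s.\ with its $G$-counterpart appearing in \eqref{56783}; since the left side is a single random variable not depending on $P$, the $G$-identity \eqref{56783} will hold $P$-a.s.\ for every $P$, hence q.s.

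First, for the stochastic integral: because $f_{-}^{\prime}$ is Borel of linear growth, Proposition \ref{Mtilde} gives $(f_{-}^{\prime}(M_s))_{s\le t}\in\bar M_G^2(0,t)$, so $\int_0^t f_{-}^{\prime}(M_s)\,dM_s$ is well defined in $L_G^2(\Omega_t)$. Approximating $f_{-}^{\prime}(M_\cdot)$ by simple integrands $\eta^n\in\bar M_G^{2,0}(0,t)$ and using that $\hat{\mathbb{E}}[\cdot]\ge E_P[\cdot]$ dominates the $P$-norms, the convergence $\eta^n\to f_{-}^{\prime}(M_\cdot)$ in $\bar M_G^2$ forces convergence of the corresponding $P$-It\^o integrals; since the two integrals coincide for simple $\eta^n$, I conclude $\int_0^t f_{-}^{\prime}(M_s)\,dM_s=\int_0^t f_{-}^{\prime}(M_s)\,dM_s^P$, $P$-a.s.\ (the same identification already used implicitly when defining $L_t(a)$).

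The delicate point is the local time term. I would first define $\int_{\mathbb{R}}L_t(a)\,df_{-}^{\prime}(a)$ pathwise, using the bicontinuous modification of $L_t(a)$ constructed above: for q.s.\ $\omega$ the map $a\mapsto L_t(a)(\omega)$ is continuous and vanishes outside the (bounded) range of the continuous path $(M_s(\omega))_{s\le t}$, so it has compact support and the Lebesgue--Stieltjes integral against the nonnegative Radon measure $df_{-}^{\prime}$ is a finite, Borel-measurable random variable. To match it with $\int_{\mathbb{R}}L_t^P(a)\,df_{-}^{\prime}(a)$ I would exploit continuity in $a$: fix a countable dense set $D\subset\mathbb{R}$; by \eqref{897834}, $L_t(a)=L_t^P(a)$ $P$-a.s.\ for each $a\in D$, and intersecting over $D$ yields a $P$-full event on which $L_t(\cdot)$ and the (classically jointly continuous) $L_t^P(\cdot)$ agree on $D$, hence everywhere by density; therefore the two Stieltjes integrals coincide $P$-a.s. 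This continuity-plus-density aggregation is the main obstacle, since \eqref{897834} only holds for each fixed $a$. Combining the three identifications gives \eqref{56783} $P$-a.s.\ for every $P$, and as the quantity $f(M_t)-f(M_0)-\int_0^t f_{-}^{\prime}(M_s)\,dM_s-\frac12\int_{\mathbb{R}}L_t(a)\,df_{-}^{\prime}(a)$ vanishes $P$-a.s.\ for all $P$, its capacity-null exceptional set shows it is $0$ q.s.

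Finally, for the membership $\int_{\mathbb{R}}L_t(a)\,df_{-}^{\prime}(a)\in L_G^1(\Omega_t)$ I would read it off \eqref{56783}, rewriting the integral as $2\bigl(f(M_t)-f(M_0)-\int_0^t f_{-}^{\prime}(M_s)\,dM_s\bigr)$. The stochastic integral lies in $L_G^2(\Omega_t)\subset L_G^1(\Omega_t)$. Since $f_{-}^{\prime}$ has linear growth, $f$ has at most quadratic growth, $|f(x)|\le C(1+|x|^2)$; as $M_t\in L_G^2(\Omega_t)$ by (H), the random variable $f(M_t)$ is quasi-continuous and dominated by $C(1+|M_t|^2)\in L_G^1(\Omega_t)$, so $f(M_t)\in L_G^1(\Omega_t)$ by Theorem \ref{LG characterization}. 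Hence the right side, and therefore the local time integral, belongs to $L_G^1(\Omega_t)$.
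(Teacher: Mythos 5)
Your proposal is correct and takes essentially the same route as the paper: apply the classical Tanaka--Meyer formula under each $P\in\mathcal{P}$, identify each term $P$-a.s.\ with its $G$-counterpart, conclude the identity holds q.s., and then obtain the $L_{G}^{1}$-membership of the local-time integral by showing $f(M_{t})\in L_{G}^{1}(\Omega_{t})$ from the quadratic growth of $f$ and Theorem \ref{LG characterization}. The only difference is one of detail, not of method: you spell out the $P$-a.s.\ identification of the stochastic integral (via approximation and domination of $P$-norms by $\hat{\mathbb{E}}$) and of the local-time integral (via the bicontinuous modification and a countable-dense-set argument upgrading the pointwise-in-$a$ identity to an identity of functions), steps which the paper compresses into the single assertion that all four terms are q.s.\ defined and coincide with their classical versions under each $P$.
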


\begin{proof}
According to Proposition \ref{Mtilde}, we have $(f_{-}^{\prime}(M_{s}))_{s\leq T}%
\in\bar{M}_{G}^{2}(0,T)$. Note that, under each $P\in\mathcal{P}$,  $\int_{0}^{t}f_{-}^{\prime}(M_{s})dM_{s}$ is also the stochastic
integral with respect to martingale $M_{t}$ and $L_{t}(a)$ is the local time
of martingale $M_{t}$. By the classical Tanaka formula for martingales, we
have
\[
f(M_{t})-f(M_{0})=\int_{0}^{t}f_{-}^{\prime}(M_{s})dM_{s}+\frac{1}{2}\int_{\mathbb{R}%
}L_{t}(a)df_{-}^{\prime}(a),\ P\text{-a.s.}%
\]
Since the four terms in the above identity both q.s. defined, we deduce that
the above formula holds q.s.

Since convex function $f$ is continuous, we have $f(M_t)$ is quasi-continuous. Moreover,   the linear growth condition of $f_{-}^{\prime}$
implies that
$|f(x)|\leq C(1+|x|+|x|^2)$ by  Problem 3.6.21 (6.46) in  \cite{KS}. Thus,
\begin{align*}
\hat{\mathbb{E}}[|f(M_t)|I_{\{|f(M_t)|>N \} }]\leq C\hat{\mathbb{E}}[(1+|M_t|+|M_t|^2)I_{\{|M_t|>\frac{N}{C} \} }]\rightarrow 0,\ \ \text{as}\ N\rightarrow \infty.
\end{align*}
Then from Theorem \ref{LG characterization}, we deduce that $f(M_t)\in L^1_G(\Omega_t)$, which, together with (\ref{56783}), implies $\int_{\mathbb{R}}L_{t}(a)df_{-}^{\prime}(a) \in L_{G}^{1}(\Omega_{t}).$
\end{proof}

Finally, we list some useful properties of local time, which follow directly from
applying the classical ones under each $P\in\mathcal{P}$.

\begin{proposition}
We have

\begin{description}
\item[\textrm{(i)}] The measure $dL_{t}(a)$ grows only when
$M=a$: $\int_{\mathbb{R}_+}I_{\{M_{t}\neq a\}}dL_{t}(a)=0,$ q.s.;

\item[\textrm{(ii)}] Occupation time formua: for each bounded or positive Borel measurable
function $g$, $\int_{0}^{T}g(M_{t})d\langle M\rangle_{t}=\int_{\mathbb{R}}
g(a)L_{T}^{P}(a)da,$ q.s.;

\item[\textrm{(iii)}] For the bicontinuous version of $L_{t}(a)$, the following representation hold: $$L_{t}(a)=\lim_{\varepsilon
	\downarrow0}\frac{1}{\varepsilon}\int_{0}^{t}I_{[a,a+\varepsilon
	)}(M_{s})d\langle M\rangle_{t}=\lim_{\varepsilon
\downarrow0}\frac{1}{2\varepsilon}\int_{0}^{t}I_{(a-\varepsilon,a+\varepsilon
)}(M_{s})d\langle M\rangle_{t}, q.s.$$
\end{description}
\end{proposition}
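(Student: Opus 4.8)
The plan is to establish each of the three properties by transferring the corresponding classical result to the $G$-framework via the representation $\mathcal{P}$ and the pathwise identity $L_t(a) = L_t^P(a)$, $P$-a.s., established in (\ref{897834}). For each $P \in \mathcal{P}$, the process $M$ is a classical continuous martingale, so all three properties hold $P$-a.s. as classical facts. The work is to promote these $P$-a.s. statements to quasi-sure statements, which requires checking that the exceptional null sets can be handled uniformly, i.e. that each identity holds outside a common polar set.

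For property (i), first I would recall the classical fact that for each fixed $a$, the measure $dL_t^P(a)$ is carried by the set $\{t : M_t = a\}$, so that $\int_0^T I_{\{M_t \neq a\}}\,dL_t^P(a) = 0$ for $P$-a.s. $\omega$. Since $L_t(a) = L_t^P(a)$ $P$-a.s. and both $M$ and the bicontinuous version of $L$ are quasi-surely defined, the integral $\int_{\mathbb{R}_+} I_{\{M_t \neq a\}}\,dL_t(a)$ agrees $P$-a.s. with its classical counterpart for every $P$; taking the supremum over $P \in \mathcal{P}$ of the (nonnegative) quantity and using $c(A) = \sup_P P(A)$ yields that it vanishes quasi-surely. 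For property (ii), the classical occupation time formula holds $P$-a.s. for each fixed bounded or positive Borel $g$; since both sides are quasi-surely defined functionals of the q.s.-defined objects $M$, $\langle M\rangle$ and the bicontinuous $L$, and since they coincide $P$-a.s. for every $P$, the identity again holds q.s. For property (iii), the two approximation formulas for $L_t(a)$ are classical $P$-a.s. limits; here I would first fix $a$ and use the bicontinuity of the modification together with Krylov's estimate (Theorem \ref{Krylov}) to control the approximating integrals $\frac{1}{\varepsilon}\int_0^t I_{[a,a+\varepsilon)}(M_s)\,d\langle M\rangle_s$ uniformly in a way that lets the classical $P$-a.s. convergence pass to a q.s. limit.

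The main obstacle will be the uniformity across the family $\mathcal{P}$: a single $P$-a.s. identity with a $P$-dependent null set does not immediately give a quasi-sure identity, since the union over the (possibly uncountable) family $\mathcal{P}$ of the exceptional sets need not be polar. The clean way around this is to phrase each property as the vanishing of a single nonnegative quasi-surely defined random variable $Z$ (for (i), $Z = \int_{\mathbb{R}_+} I_{\{M_t \neq a\}}\,dL_t(a)$; for (ii) and (iii), the absolute difference of the two sides), observe that $Z = 0$ $P$-a.s. for every $P \in \mathcal{P}$, and then invoke $\hat{\mathbb{E}}[Z] = \sup_{P}E_P[Z] = 0$ to conclude $c(\{Z \neq 0\}) = 0$, i.e. $Z = 0$ q.s. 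This reduction works precisely because each property, once the objects are realized through their quasi-sure versions, is the statement that a single functional vanishes, so there is no need to control a continuum of null sets directly. For (iii) the extra care is that the formula is a limit rather than a fixed identity; I would handle this by establishing convergence in $L_G^1(\Omega_t)$ along a sequence $\varepsilon = 1/n$ using the Krylov bound, extracting a quasi-sure convergent subsequence via Borel--Cantelli as in the proof of the continuity of $\int_0^t \eta_s\,dM_s$, and then identifying the q.s. limit with the bicontinuous $L_t(a)$ using (\ref{897834}).
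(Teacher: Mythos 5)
Your transfer mechanism --- realize every object through its quasi-sure version, phrase each property as the vanishing of a single nonnegative q.s.-defined random variable $Z$, check that $Z=0$ $P$-a.s. for each $P\in\mathcal{P}$, and conclude $c(\{Z\neq 0\})=\sup_{P\in\mathcal{P}}P(\{Z\neq 0\})=0$ --- is precisely what the paper's one-line proof (``the properties follow directly from applying the classical ones under each $P\in\mathcal{P}$'') implicitly relies on, and your treatment of (i) and (ii) is correct. You are also right about \emph{why} this is legitimate: the exceptional set is a single Borel set attached to q.s.-defined objects, not a $P$-dependent family of null sets, so no uncountable union ever has to be controlled.

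The gap is in your treatment of (iii). Your premise that ``a limit rather than a fixed identity'' needs special handling is mistaken: unlike the stochastic integral, the quantity $\frac{1}{\varepsilon}\int_{0}^{t}I_{[a,a+\varepsilon)}(M_{s})\,d\langle M\rangle_{s}$ is a pathwise Lebesgue--Stieltjes integral against the q.s.-defined, continuous, nondecreasing process $\langle M\rangle$, so the assertion that the limit exists and equals $L_{t}(a)$ is a pathwise statement; its failure set is again a single Borel set, null under every $P$ by the classical theory, and the same transfer as in (i)--(ii) applies verbatim. Moreover, the substitute argument you propose would fail as written. Krylov's estimate applied to $g=\frac{1}{\varepsilon}I_{[a,a+\varepsilon)}$ gives $\hat{\mathbb{E}}[\frac{1}{\varepsilon}\int_{0}^{T}I_{[a,a+\varepsilon)}(M_{t})\,d\langle M\rangle_{t}]\leq C\varepsilon^{1/p-1}$, which at best ($p=1$) is a uniform \emph{bound} and for $p>1$ blows up; it gives no estimate whatsoever on the difference between the approximating integral and $L_{t}(a)$, so it cannot produce the $L_{G}^{1}$-convergence you want to feed into Borel--Cantelli. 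Even granting that convergence, a Borel--Cantelli subsequence $\varepsilon_{n_{k}}$ only yields q.s. convergence along that subsequence, not through all $\varepsilon\downarrow 0$, unless you add a sandwich argument using monotonicity of $\varepsilon\mapsto\int_{0}^{t}I_{[a,a+\varepsilon)}(M_{s})\,d\langle M\rangle_{s}$ with ratio control $\varepsilon_{n_{k}}/\varepsilon_{n_{k+1}}\to 1$; and fixing $a$ first leaves the statement for all $a$ simultaneously out of reach. The clean argument is the one the classical theory itself uses and which fits your own mechanism: apply (ii) q.s. to a countable family of indicators $I_{[p,q)}$, $p,q\in\mathbb{Q}$ (and rational $t$, extended by continuity), so that q.s. the occupation identity holds as an identity of measures; then q.s., for all $(a,t)$, $\frac{1}{\varepsilon}\int_{0}^{t}I_{[a,a+\varepsilon)}(M_{s})\,d\langle M\rangle_{s}=\frac{1}{\varepsilon}\int_{a}^{a+\varepsilon}L_{t}(x)\,dx$, which converges pathwise to $L_{t}(a)$ by the continuity of $x\mapsto L_{t}(x)$ for the bicontinuous version.
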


\end{document}